\renewcommand{\P}{{\rm P}}
\newcommand{\E}{{\rm E}}
\renewcommand{\sp}{\mathrm{sp}}
\newcommand{\tr}{^{\mbox{\tiny T}}}
\newcommand{\diag}{\mathrm{diag}}
\newcommand{\re}{\mathrm{Re}}
\newcommand{\ud}{\, \mathrm{d}}  
\def\C{\mathds C}
\def\FF{{\cal F}}
\def\GG{{\cal G}}
\def\HH{{\cal H}}
\renewcommand{\u}{{\mbox{\tiny $+$}}}
\renewcommand{\d}{{\mbox{\tiny $-$}}}
\newcommand{\vect}[1]{\boldsymbol #1}
\newcommand{\valpha}{\vect \alpha}
\newcommand{\vgamma}{\vect \gamma}
\newcommand{\vone}{\vect 1}
\newcommand{\vzero}{\vect 0}
\newcommand{\ve}{\vect e}
\newcommand{\vp}{\vect p}
\newcommand{\Psist}{\Psi^*}
\newcommand{\epsi}{\varepsilon}
\newcommand{\vligne}[1]{\begin{bmatrix} #1 \end{bmatrix}}
\newtheorem{defn}{Definition}[section]
\newtheorem{lem}[defn]{Lemma}
\newtheorem{prop}[defn]{Proposition}
\newtheorem{theo}[defn]{Theorem}
\newtheorem{thm}[defn]{Theorem}
\newtheorem{cor}[defn]{Corollary}
\newtheorem{rem}[defn]{Remark}
\newtheorem{ass}[defn]{Assumption}
\newcommand{\debproof}{\begin{proof}}
\newcommand{\finproof}{\end{proof}}
\newcommand{\bs}{\boldsymbol}
\begin{document}

\begin{frontmatter}

\title{The morphing of fluid queues 
into Markov-modulated Brownian motion}
\runtitle{The morphing of fluid queues into MMBM}


\author{\fnms{Guy} \snm{Latouche}\ead[label=e1]{guy.latouche@ulb.ac.be}
}
\thankstext{t1}{The authors had interesting discussions with V. Ramaswami over the
questions examined in this paper. They thank three anonymous referees for their careful analysis and insightful suggestions, which helped streamline some of the proofs and improve the overall presentation of the paper. 
They also thank the Minist\`ere de la
Communaut\'e fran\c{c}aise de Belgique for funding this research
through the ARC grant AUWB-08/13--ULB~5, and acknowledge the
financial support of the Australian Research Council through the
Discovery Grant DP110101663.} 
\address{Universit\'{e} libre de Bruxelles \\ 
D\'{e}partement d'Informatique \\ \printead{e1}}
\author{\fnms{Giang} \snm{Nguyen}\corref{}\ead[label=e2]{giang.nguyen@adelaide.edu.au}\thanksref{t1}}
\address{The University of Adelaide \\ School of Mathematical Sciences \\ \printead{e2}}

\runauthor{G. Latouche and G. T. Nguyen}

\begin{abstract}
Ramaswami showed recently that standard Brownian motion arises as the
limit of a family of Markov-modulated linear fluid processes. We
pursue this analysis with a fluid approximation for Markov-modulated
Brownian motion. We follow a Markov-renewal approach and  we prove
that the stationary distribution of a Markov-modulated Brownian motion
reflected at zero is the limit from the well-analyzed stationary
distribution of approximating linear fluid processes. Thus, we provide
a new approach for obtaining the stationary distribution of a
reflected MMBM without time-reversal or solving partial differential
equations.  Our results open the way to the analysis of more complex Markov-modulated processes.

Key matrices in the limiting stationary distribution are shown to be solutions of a matrix-quadratic equation, and we describe how this equation can be efficiently solved. 
\end{abstract}

\begin{keyword}[class=AMS]
\kwd{60J25, 60J65, 60B10}
\end{keyword}

\begin{keyword}
\kwd{Markov-modulated linear fluid models, Markov-modulated Brownian motion, weak convergence, stationary distribution, computational methods}
\end{keyword}

\end{frontmatter}



\section{Introduction}
	
Our purpose is to construct and analyse a family of fluid queues
converging to Markov-modulated Brownian motion (MMBM) with the
intention of adapting, to the analysis of MMBM, tools and methods
which have been developed in the context of fluid queues.

Fluid queues are two-dimensional processes $\{X(t), \varphi(t): t \geq 0\}$, where $\{\varphi(t): t \geq 0\}$ is a continous-time Markov chain on a finite state space $\mathcal{S}$, 
\begin{align*}
X(t) = X(0) + \int_0^t c_{\varphi(t)} \ud t,  
\end{align*} 
and $c_i$ for $i \in \mathcal{M}$ are arbitrary real numbers. These
are also known as \emph{Markov-modulated linear fluid
  processes}, with $X$ referred to as the \emph{fluid level} and
$\varphi$ as the \emph{phase}: during intervals of time where the
phase $\varphi$ remains in some state $i \in \mathcal{M}$, the
fluid level varies linearly at the rate $c_i$. The associated process
reflected at zero is denoted by $\{\widehat{X}(t), \varphi(t): t \geq
0\}$, where
\begin{align*}
\widehat{X}(t) = X(t) - \inf_{0 \leq v \leq t} X(v),  
\end{align*} 
assuming that $\widehat{X}(0) = 0$. Also referred to as {first-order
  fluid processes}, these stochastic models are useful when the
relevant rates of change can be well-described by their first moments.

{Markov-modulated Brownian motion} (MMBM), or the family of
{second-order fluid processes}, takes into account first and second
moments of the rates of change. In particular, the fluid level $Y(t)$
of a Markov-modulated Brownian motion $\{Y(t), \kappa(t): t \geq 0\}$
is a Brownian motion with drift $\mu_i$ and variance $\sigma^2_i$
during time intervals where $\kappa(t) = i$; one sometimes writes that
\[
Y(t) = Y(0) + \int_0^t c_{\kappa(t)} \ud t + \int_0^t \sigma_{\kappa(t)} \ud W(t)
\]
where $\{W(t)\}$ is a standard Brownian motion.

Three papers appeared in close succession on the stationary
distribution of MMBM reflected at zero: Rogers~\cite{roger94}, Asmussen~\cite{asmussen95}, and Karandikar and
Kulkarni~\cite{kk95}.  The focus in the third paper is on solving
partial differential equations, and it is not of further concern to us
in the present paper. In \cite{asmussen95,roger94}, on the other
hand, the authors obtain the stationary distribution in a form which
is suitable for calculations with linear algebraic procedures.  These
results crucially depend on the technique of reversing time, a method
already used in Loynes~\cite{loyne62b} whereby the stationary
distribution of the process is obtained from the distribution of the
maximum of a random walk with negative drift.
More recent work for obtaining the stationary distribution of 
Markov-modulated L\'{e}vy processes with reflecting boundaries, as in
Asmussen and Kella~\cite{ao00}, Ivanovs~\cite{ivanovs-japs10}, D'Auria
\emph{et al.}~\cite{dikm12}, and D'Auria and Kella~\cite{dk12}, also
uses the reverse-time approach.

For fluid processes without a Brownian component, another line of
investigation was open in Ramaswami~\cite{ram99}, based on
renewal-type arguments similar to the ones used in the analysis of
quasi-birth-and-death processes (Neuts~\cite[Chapter 3]{neuts81},
Latouche and Ramaswami~\cite[Chapter~6]{lr99}).  This eventually led,
in addition to interesting algorithmic procedures, to theoretical
developments for fluid processes in finite time, and for systems with more
complex interactions between phase and level.    There, the flow of
time is not reversed and this creates a significant difference in the
methods of analysis.

Our intention is to establish a link between the results for fluid
queues and those for MMBM.
In Section~\ref{sec:fluid-to-mmbm}, we extend the argument from
Ramaswami~\cite{ram11} and define a parameterised family of linear
fluid processes that converge weakly, as the parameter tends to
infinity, to a Markov-modulated Brownian motion.   Ahn and Ramaswami~\cite{ra13} are independently using matrix-analytic methods to analyze Markov-modulated Brownian motions, with different approaches to ours. 
We determine in Sections~\ref{sec:statdis} the limiting structure of
key matrices and quadratic matrix equations, and we establish the
connection between the stationary distribution so obtained, and the
one which follows from the time-reversed approach.  We present in
Section~\ref{sec:comproc} a computational procedure for solving the
quadratic equation efficiently.

\section{Markov-modulated Brownian motion}   
	\label{sec:fluid-to-mmbm}

We show here that a family of linear fluid processes converges weakly
to a Markov-modulated Brownian motion $\{Y(t), \kappa(t): t \geq 0\}$,
where the phase process $\kappa$ is a Markov chain with state space
$\mathcal{M} = \{1, \ldots, m\}$, and $Y$ is a Brownian motion with
drift $\mu_i$ and variance $\sigma^2_i$ whenever $\kappa(t) = i \in
\mathcal{M}$. We denote by $D$ the drift matrix $\diag (\mu_1, \ldots,
\mu_m)$, by $V$ the variance matrix $\diag(\sigma^2_1, \ldots,
\sigma^2_m)$, and by $Q$ the generator of $\kappa$, and we assume that
$Q$ is irreducible. 

\begin{ass} At time 0, the level $Y(0)$ is equal to 0,
  and the initial phase $\kappa(0)$ has the stationary distribution
  $\valpha$ of $Q$ ($\valpha Q = \vzero$ and $\valpha \vone = 1$).
\end{ass} 
Formally, for $t \geq 0$ the process $Y(t)$ can be defined recursively as 
\begin{align}
	\label{eqn:recY} 
Y(t) = Y(T) + \sum_{i \in \mathcal{M}} \mathds{1}_{\{\kappa(T) = i\}}\{Y_i(t) - Y_i(T)\}, 
\end{align} 
where $Y(0) = 0$, the random variable $T$ is the last jump epoch of $\kappa$ before~$t$ ($T = 0$ if there has yet to be a jump), the process $Y_i$ is a Brownian motion with mean $\mu_i$ and variance $\sigma_i^2$, and $\mathds{1}_{\{\cdot\}}$ denotes the indicator function. The processes $Y_i$ for al $i \in \mathcal{M}$ and $\varphi$ are assumed to be mutually independent.  

We construct the family of fluid processes $\{L_{\lambda}(t),\beta_\lambda(t), \varphi_\lambda(t): t \geq 0\}$ as
follows: the phase process is a two-dimensional Markov chain $(\beta_\lambda(t), \varphi_\lambda(t))$ with state space $\mathcal{S}
= \{(k,i): k \in \{1,2\} \mbox{ and } i \in \mathcal{M}\}$ and generator
\begin{align*}
T_{\lambda} = \left[\begin{array}{cc} Q - \lambda I & \lambda I \\
\lambda I & Q - \lambda I
\end{array}\right], 
\end{align*} 
where the entries of~$T_\lambda$ follow the lexicographic ordering of
$\{1,2\} \times \mathcal{M}$, and $I$ denotes an appropriately-sized
identity matrix. Whenever ambiguity might arise, we write $I_n$ to
denote the $n \times n$ identity matrix. The fluid rate matrix
$C_{\lambda}$ is given by
\begin{align*}
C_{\lambda} = \left[\begin{array}{cc} 
D + \sqrt{\lambda} \Theta & \\
& D - \sqrt{\lambda} \Theta 
\end{array}\right],  \quad \mbox{where $\Theta = \sqrt{V}$.}
\end{align*} 
%
%
%
\begin{ass} \label{ass:fluid}  At time $0$, the level
$L_{\lambda}(0)$ is equal to $0$, and the initial phases $\beta_{\lambda}(0)$
and $\varphi_{\lambda}(0)$ have their respective stationary
distributions $\boldsymbol{\gamma} = (1/2, 1/2)$ and
$\boldsymbol{\alpha}$, their joint distribution is $\vp = \vgamma
\otimes  \boldsymbol{\alpha}$. 
\end{ass} 

Intuitively speaking, we duplicate the state space $\mathcal{M}$ in the Markov-modulated Brownian motion $\{Y(t), \kappa(t)\}$ and the auxiliary process $\beta_\lambda(t)$ keeps track of which copy is in use.
Note that for~$\lambda$ sufficiently large, the phases in the copy
with $\beta_{\lambda}(t) = 1$ have all positive rates while the phases
in the other copy have all negative rates.   With this construction, we  show that the conditional moment generating function of $\{L_{\lambda}(t), \varphi_\lambda(t)\}$ converges to that of $\{Y(t),\kappa(t)\}$. 

\begin{rem} \em
	\label{rem:reurs}
Based on the recursive definition~\eqref{eqn:recY} of $Y$, an
alternative interpretation for our fluid-based approximation is that
for each phase $i \in \mathcal{M}$ we approximate the process $Y_i$ by
a two-phase fluid process $\{L_{\lambda}^{i}(t),
\beta_{\lambda}^{i}(t)\}$ 
the phase $\beta_{\lambda}^{i}$ is a Markov chain with state space $\mathcal{S}^{i} = \{1,2\}$ and generator
\begin{align*}
T_{\lambda}^{i} = \left[\begin{array}{rr} -\lambda & \lambda \\ 
\lambda & -\lambda \end{array} \right],  
\end{align*} 
and the fluid rate matrix $C_{\lambda}^{i}$ is given by 
\begin{align*}
C_{\lambda}^{i}  = \left[\begin{array}{cc} \mu_i + \sqrt{\lambda}\sigma_i &  \\ 
& \mu_i - \sqrt{\lambda}\sigma_i \end{array}\right],
\end{align*} 
the approximating processes being independent.
\end{rem}
%
%
%

Denote by $\Delta_Y(s)$ the $m \times m$ Laplace matrix exponent of $\{Y(t), \kappa(t)\}$, by $\tilde{\Delta}_{\lambda}(s)$ the $2m \times 2m$ Laplace matrix exponent of $\{L_{\lambda}(t), \beta_{\lambda}(t), \varphi_{\lambda}(t)\}$, and by $\Delta_{\lambda}(s)$ the $m \times m$ Laplace matrix exponent of $\{L_{\lambda}(t), \varphi_{\lambda}(t)\}$. These matrices are such that 
\begin{align}
[e^{\Delta_Y(s)t}]_{ij} & = \E[e^{sY(t)}\mathds{1}_{\{\kappa(t) = j\}}|Y(0) = 0, \kappa(0) = i], \nonumber \\
[e^{\tilde{\Delta}_{\lambda}(s)t}]_{(k,i)(k',j)} & = \E[e^{sL_{\lambda}(t)}\mathds{1}_{\{\beta_{\lambda}(t) = k', \varphi_{\lambda}(t) = j\}}| L_{\lambda}(0) = 0, \beta_{\lambda}(0) = k, \varphi_{\lambda}(0) = i], \nonumber \\
\intertext{and} 
[e^{\Delta_{\lambda}(s)t}]_{ij} & = \E[e^{sL_{\lambda}(t)}\mathds{1}_{\{\varphi_{\lambda}(t) = j\}}| L_{\lambda}(0)  = 0, \varphi_{\lambda}(0) = i]. \label{eqn:Lme3} \end{align} 
By Asmussen and Kella~\cite{ao00}, the Laplace matrix exponent of a Markov-modulated L\'{e}vy process $\{Z(t), \xi(t)\}$ with jumps is given by 
\begin{align*}
\Delta_Z(s) = \diag(\phi_1(s), \ldots, \phi_p(s)) + Q \circ R(s), 
\end{align*} 
where $\phi_i(s)$ is the Laplace exponent of an unmodulated L\'{e}vy process with parameters defined for phase $i \in \{1, \ldots, p\}$, $Q$ is the phase-transition matrix of $\xi(t)$, $R$ is the matrix with components $[R(s)]_{ij} = \E[e^{sW_{ij}}]$, which are the Laplace transforms of the jumps $W_{ij}$ for $Z(t)$ when $\xi(t)$ moves from $i$ to $j$, and $\circ$ indicates the Hadamard product. 

As the Laplace exponent of an unmodulated Brownian motion with drift $\mu$ and variance $\sigma^2$ is given by $\mu s + \sigma^2 s^2/2$, one can verify that
\begin{align}
\nonumber
 \Delta_Y(s) &= sD + (s^2/2) V + Q,  \\
\nonumber
\tilde{\Delta}_{\lambda}(s) &= sC_{\lambda} + T_{\lambda}  \\
\nonumber
 & = I \otimes M + s \sqrt{\lambda} J \otimes \Theta + \lambda G \otimes I,
\intertext{where $M=sD+Q$, and} 
  \label{eqn:eDelta3}
e^{\Delta_{\lambda}(s)t} &= (\bs{\gamma} \otimes I)e^{\tilde{\Delta}_{\lambda}(s)t}(\bs{1} \otimes I),
\end{align} 
where $\bs{1}$ is an appropriately-sized column vector of ones.

The next lemma gives a technical property of the matrix
exponential, it is used in the proof of Theorem~\ref{theo:mgfMMBM}.

\begin{lem}
   \label{t:corner}
Let $S$ be the block-partitioned matrix
\[
S = \vligne{S_{11} & S_{12} \\ S_{21}  & S_{22}}
\]
where $S_{11}$ and $S_{22}$ are matrices of order $m_1$ and $m_2$, respectively.  Denote by $H(t)$ the north-west quadrant of order $m_1$ of $e^{S t}$:
\[
H(t) = \vligne{I_{m_1 \times m_1} & 0}  e^{S t} \vligne{I_{m_1 \times m_1} \\ 0}.
\]

The matrix $H(t)$ is the solution of 
\begin{equation}
   \label{e:cornera}
H(t) = e^{S_{11}t} + \int_0^t  \int_v^t e^{S_{11}(t-u)} S_{12}    e^{S_{22}(u-v)} S_{21} H(v) \, \ud u \, \ud v.
\end{equation}
\end{lem}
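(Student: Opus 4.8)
The plan is to identify $H(t)$ as the unique solution of a linear integral equation, and then verify that the right-hand side of \eqref{e:cornera} satisfies that same equation. The natural starting point is the variation-of-constants (Duhamel) decomposition of $e^{St}$. Write $S = B + N$, where $B = \diag(S_{11}, S_{22})$ is the block-diagonal part and $N$ collects the off-diagonal blocks $S_{12}, S_{21}$. Then the standard Duhamel identity gives
\[
e^{St} = e^{Bt} + \int_0^t e^{B(t-s)} N \, e^{Ss} \ud s,
\]
and one can iterate this once more on the factor $e^{Ss}$ inside the integral to obtain a second-order expansion with remainder still involving $e^{S\cdot}$.

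First I would compress notation by writing $P_1 = \vligne{I_{m_1} & 0}$ and $P_1^{\tr}$ for its transpose (similarly $P_2$ for the south-east projection), so that $H(t) = P_1 e^{St} P_1^{\tr}$, and note the block structure $P_1 e^{Bt} P_1^{\tr} = e^{S_{11}t}$, $P_1 N = P_1 N P_2^{\tr} P_2 = S_{12} P_2$ (since the only nonzero block of $N$ reachable from the top is $S_{12}$), and $P_2 N P_1^{\tr} = S_{21}$. Applying the once-iterated Duhamel identity, sandwiching by $P_1$ on the left and $P_1^{\tr}$ on the right, and inserting $I = P_1^{\tr}P_1 + P_2^{\tr}P_2$ at the appropriate spots, the cross terms $P_1 e^{B\cdot}P_2^{\tr}$ vanish because $e^{B\cdot}$ is block-diagonal. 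What survives is exactly a term $e^{S_{11}t}$, plus a double integral in which the inner propagator is $e^{S_{22}(u-v)}$, flanked by $S_{12}$ and $S_{21}$, and with the leftover factor $P_1 e^{S v} P_1^{\tr} = H(v)$; this reproduces \eqref{e:cornera}. Alternatively, and perhaps more cleanly, one shows both sides of \eqref{e:cornera} satisfy the same linear Volterra equation and invokes uniqueness: $H$ does so because of the Duhamel computation just described, and the right-hand side of \eqref{e:cornera} is manifestly of Volterra type with a continuous (indeed analytic) kernel, so its solution is unique by the usual Picard/Gr\"onwall argument.

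The main technical care — though not a deep obstacle — is bookkeeping the order of integration: the naive second iteration of Duhamel produces an integral over $\{0 \le s' \le s \le t\}$, and one must relabel $s \mapsto u$, $s' \mapsto v$ so the region becomes $\{0 \le v \le u \le t\}$, matching the limits $\int_0^t \int_v^t (\cdot)\ud u\,\ud v$ in \eqref{e:cornera}; the propagator exponents $t-u$, $u-v$ then fall into place automatically. A secondary point is to be sure the remainder term of the expansion is the one carrying $H(v)$ and not something that needs further unpacking — this is why iterating Duhamel exactly twice (not once, not indefinitely) is the right choice: once leaves $e^{S\cdot}$ adjacent to the wrong projector, and more iterations are unnecessary since after two steps the surviving $e^{S\cdot}$ factor is already correctly flanked by $P_1$ and $P_1^{\tr}$. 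Everything else is routine matrix manipulation, using only that the matrix exponential is real-analytic and that block-diagonal matrices have block-diagonal exponentials.
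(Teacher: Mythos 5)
Your argument is correct, and it takes a mildly different route from the paper's. You split $S=B+N$ with $B=\diag(S_{11},S_{22})$ block-diagonal, apply the Duhamel identity and iterate it once, then project onto the north-west corner: the first-order term vanishes because $P_1 N e^{Bs}P_1\tr = S_{12}e^{S_{22}s}P_2P_1\tr=0$, and the second-order remainder survives with the factor $P_1 e^{Ss'}P_1\tr=H(s')$ correctly flanked, so after relabelling and exchanging the order of integration over the triangle $\{0\le v\le u\le t\}$ you land exactly on \eqref{e:cornera}. The paper instead splits $S=S_A+S_E$ with $S_A$ block \emph{upper-triangular} (containing $S_{11}$, $S_{12}$, $S_{22}$) and $S_E$ carrying only $S_{21}$; one Duhamel step then suffices, because the inner integral $\int_0^{t}e^{S_{11}\cdot}S_{12}e^{S_{22}\cdot}\,\ud u$ is supplied directly by the closed-form north-east block of $e^{S_A t}$ (Higham's formula for the exponential of a block-triangular matrix). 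The two proofs are equivalent in substance: yours trades that closed-form block-exponential for a second iteration of Duhamel together with the observation that the first-order cross term is killed by the projections, whereas the paper's asymmetric splitting makes that cancellation automatic by absorbing $S_{12}$ into $S_A$. Your closing remark on uniqueness of the solution of the Volterra equation \eqref{e:cornera} is a small bonus that the paper leaves implicit.
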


\begin{proof}
We decompose $S$ as the sum $S=S_A + S_E$, with
\[
S_A = \vligne{S_{11} & S_{12} \\ 0 & S_{22}} \qquad \mbox{and} \qquad 
S_E = \vligne{0 & 0 \\ S_{21} & 0} = \vligne{0 \\ I}  S_{21} \vligne{I & 0} .
\]
By Higham~\cite[Equations (10:13) and (10:40)]{higham08}, we obtain
\[
e^{S t} = e^{S_A t}  + \int_0^t  e^{S_A(t-v)} S_E e^{S (v)} \, \ud v.
\]
and
\[
e^{S_A t} = \vligne{
e^{S_{11}t}  & \int_0^t  e^{S_{11}(t-u)} S_{12} e^{S_{22}u} \, \ud u\\
0 & e^{S_{22}t}
}
 \]
Thus,
\begin{eqnarray*}
H(t) & = & 
  \vligne{I & 0}  e^{S_A t} \vligne{I \\ 0} 
  + \int_0^t \vligne{I & 0}  e^{S_A(t-v)} S_E e^{S (v)} \vligne{I \\ 0} \, \ud v
\\
 &=& e^{S_{11}t} + \int_0^t \vligne{I & 0}  e^{S_A(t-v)} \vligne{0 \\ I}  S_{21} \vligne{I & 0} e^{S (v)} \vligne{I \\ 0} \, \ud v
\\
  &=& e^{S_{11}t} + \int_0^t \int_0^{t-v} e^{S_{11}(t-v-u)} S_{12} e^{S_{22}u} S_{21} H(v)  \, \ud u  \, \ud v
\end{eqnarray*}
which proves (\ref{e:cornera}).
\end{proof}

\begin{theo} \label{theo:mgfMMBM} 
The conditional moment generating function of $\{L_{\lambda}(t),\varphi_{\lambda}(t)\}$ converges to that of $\{Y(t),\kappa(t)\}$, that is, 
\begin{align} \label{e:limit}
\lim_{\lambda \rightarrow \infty} (\bs{\gamma} \otimes I)e^{\tilde{\Delta}_{\lambda}(s)t}(\bs{1} \otimes I) & = e^{\Delta_Y(s)t}. 
\end{align} 
\end{theo}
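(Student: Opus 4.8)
The plan is to use Lemma~\ref{t:corner} to obtain a Volterra-type integral equation for the north-west quadrant $H_\lambda(t)$ of $e^{\tilde\Delta_\lambda(s)t}$, to identify $(\bs\gamma\otimes I)e^{\tilde\Delta_\lambda(s)t}(\bs1\otimes I)$ with a symmetrized combination of such quadrants (using the $\pm\sqrt\lambda\Theta$ block structure of $C_\lambda$), and then to pass to the limit $\lambda\to\infty$ inside the integral equation, showing that the limit satisfies the integral equation equivalent to $\dot H(t)=\Delta_Y(s)H(t)$, $H(0)=I$, whose unique solution is $e^{\Delta_Y(s)t}$. Concretely, write $\tilde\Delta_\lambda(s)$ in the block form with $S_{11}=M+\sqrt\lambda\Theta-\lambda I$, $S_{22}=M-\sqrt\lambda\Theta-\lambda I$, and $S_{12}=S_{21}=\lambda I$, where $M=sD+Q$. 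Lemma~\ref{t:corner} then gives, for the NW quadrant $H_\lambda$,
\[
H_\lambda(t)=e^{S_{11}t}+\lambda^2\int_0^t\!\!\int_v^t e^{S_{11}(t-u)}e^{S_{22}(u-v)}H_\lambda(v)\,\ud u\,\ud v,
\]
and an analogous identity holds for the other three quadrants by symmetry; summing the four with the $\tfrac12$ weights from $\bs\gamma$ produces $F_\lambda(t):=(\bs\gamma\otimes I)e^{\tilde\Delta_\lambda(s)t}(\bs1\otimes I)$.

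The heart of the argument is the asymptotics of the scalar-in-$\lambda$ kernel. First I would observe that $e^{S_{11}t}=e^{-\lambda t}e^{(M+\sqrt\lambda\Theta)t}\to 0$ as $\lambda\to\infty$ for each fixed $t>0$ (the $\sqrt\lambda\Theta$ term is dominated by the $\lambda I$ term), so the free term vanishes in the limit; the same is true of the corresponding free terms for the other quadrants. For the integral term, change variables $u=v+r$ so the inner integral runs over $r\in[0,t-v]$, and note
\[
\lambda^2\,e^{S_{11}(t-v-r)}e^{S_{22}r}
=\lambda^2 e^{-\lambda(t-v)}\,e^{(M+\sqrt\lambda\Theta)(t-v-r)}e^{(M-\sqrt\lambda\Theta)r}.
\]
Integrating the factor $\lambda^2 e^{-\lambda(t-v)}$ in $v$ against a bounded continuous function concentrates mass near $v=t$; more precisely, after the substitution $v=t-w/\lambda$ one is left, to leading order, with $\lambda\int_0^\infty e^{-\lambda w}(\cdot)\,\ud w$-type expressions, and the key computation is that
\[
\lambda\int_0^{t-v}e^{(M+\sqrt\lambda\Theta)(t-v-r)}e^{(M-\sqrt\lambda\Theta)r}\,\ud r
\]
behaves, as $\lambda\to\infty$, like $\tfrac12(s^2/2)V + sD + Q$ acting appropriately — the cross terms $\pm\sqrt\lambda\Theta$ combine, via $\Theta^2=V$, into the $(s^2/2)V$ Brownian contribution, while the $M$ terms survive as $sD+Q$. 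This is the step I expect to be the main obstacle: making the interchange of limit and double integral rigorous (dominated convergence with a $\lambda$-uniform bound on $\|e^{S_{ii}t}\|$, which follows since $\|\Theta\|$ is fixed and $\Theta$ is diagonal, so $\|e^{(M\pm\sqrt\lambda\Theta)t}\|\le e^{(\|M\|+\sqrt\lambda\|\Theta\|)t}$ is beaten by $e^{-\lambda t}$ for $\lambda$ large), and correctly bookkeeping the $\sqrt\lambda$ cross terms so that they assemble into $(s^2/2)V$ rather than diverging or cancelling.

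Having shown that $F_\lambda(t)$ converges (uniformly on compact $t$-intervals, say) to a limit $F(t)$ satisfying
\[
F(t)=I+\int_0^t\bigl(sD+(s^2/2)V+Q\bigr)F(v)\,\ud v,
\]
I would conclude by uniqueness of solutions to this linear Volterra equation that $F(t)=e^{(sD+(s^2/2)V+Q)t}=e^{\Delta_Y(s)t}$, using the expression $\Delta_Y(s)=sD+(s^2/2)V+Q$ recorded before the theorem. A clean way to organize the passage to the limit is to first establish, via the $\lambda$-uniform operator-norm bound above together with Gr\"onwall, that the family $\{F_\lambda\}$ is uniformly bounded and equicontinuous on compacts, extract a convergent subsequence by Arzel\`a--Ascoli, identify any subsequential limit with the unique solution of the Volterra equation, and then upgrade to full convergence. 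I would isolate the delicate kernel asymptotics as a separate sub-lemma so the main proof reads as a routine limit-in-the-integral-equation argument.
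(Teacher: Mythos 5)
Your overall scaffolding (corner lemma, Volterra equation, pass to the limit, uniqueness) matches the paper's, but you apply Lemma~\ref{t:corner} in the \emph{original} block coordinates, with $S_{11}=M+s\sqrt\lambda\,\Theta-\lambda I$, $S_{22}=M-s\sqrt\lambda\,\Theta-\lambda I$, $S_{12}=S_{21}=\lambda I$, and there the limit degenerates. As you note, the free term $e^{S_{11}t}=e^{-\lambda t}e^{(M+s\sqrt\lambda\Theta)t}\to 0$; but the kernel $\lambda^{2}e^{-\lambda(t-v)}\int_0^{t-v}e^{(M+s\sqrt\lambda\Theta)(t-v-r)}e^{(M-s\sqrt\lambda\Theta)r}\,\ud r$ is then an \emph{approximate identity} in $v$: its mass concentrates on $t-v=O(1/\lambda)$, where the inner integral is $\approx(t-v)I$, so its total mass tends to $I$, not to an $O(1)$-density kernel. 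Hence the naive limit of your integral equation is the vacuous statement $F_\infty(t)=F_\infty(t)$; all the information sits in the $O(1/\lambda)$ corrections, which you have not computed (and which would require uniform control of $H_\lambda'$, a genuinely harder singular-perturbation argument). Your stated conclusion is also internally inconsistent: with free terms tending to $0$ and a limiting kernel of density $\Delta_Y(s)$, the Volterra equation would force $F\equiv 0$, not $F(t)=I+\int_0^t\Delta_Y(s)F(v)\,\ud v$; and the "key computation" as written is false, since for fixed $t-v>0$ the quantity $\lambda\int_0^{t-v}e^{(M+s\sqrt\lambda\Theta)(t-v-r)}e^{(M-s\sqrt\lambda\Theta)r}\,\ud r$ diverges (like $e^{c\sqrt\lambda(t-v)}$) rather than converging to a constant matrix. (Smaller issues: the $s$ factor on $\sqrt\lambda\,\Theta$ is dropped, and the "analogous identities for the other three quadrants" would need to be derived, though that part is fixable.)

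The missing idea is the paper's preliminary algebraic step, equations \eqref{e:fk}--\eqref{e:akbk}: because $\vone$ and $\ve=(1,-1)\tr$ diagonalize the $2\times 2$ structure of $T_\lambda$, one shows by induction that
\begin{align*}
(\bs\gamma\otimes I)\,e^{\tilde\Delta_\lambda(s)t}(\vone\otimes I)=\vligne{I & 0}\,e^{\Upsilon t}\vligne{I\\0},
\qquad
\Upsilon=\vligne{M & s\sqrt\lambda\,\Theta\\ s\sqrt\lambda\,\Theta & M-2\lambda I},
\end{align*}
so the quantity of interest is exactly one north-west corner, but of a matrix whose $(1,1)$ block is $M=sD+Q$, free of $\lambda$, while only the $(2,2)$ block carries the $-2\lambda I$ decay and the coupling blocks carry the $\sqrt\lambda$. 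Applying Lemma~\ref{t:corner} to $\Upsilon$ then gives a nondegenerate free term $e^{Mt}$ and a kernel $s^{2}\lambda\int_v^t e^{M(t-u)}\Theta e^{(M-2\lambda I)(u-v)}\,\ud u\,\Theta$ which, after an integration by parts, genuinely converges to $(s^{2}/2)e^{M(t-v)}V$; the limiting Volterra equation then has free term $e^{Mt}$ and an $O(1)$ kernel, and its unique solution is $e^{(M+(s^{2}/2)V)t}=e^{\Delta_Y(s)t}$. Without this change of basis (or an equivalent device that removes the $-\lambda I$ from the block whose corner you track), the limit-in-the-integral-equation strategy does not identify the limit, so as it stands your proposal has a genuine gap.
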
 

\begin{proof}
We proceed in three steps. 
First, we observe that
\begin{align}
   \label{e:fk}
\tilde{\Delta}_\lambda^k(s) (\vone \otimes I) = \vone \otimes A_k  + \ve \otimes B_k, \qquad \mbox{for $k \geq 0$,}
\end{align}
where $\bs{e} = (1,-1)^{\mbox{\tiny T}}$ and 
\begin{align}
   \label{e:akbk}
\vligne{A_k \\ B_k} = 
  \Upsilon^k
  \vligne{I \\ 0},
\end{align}
with
\begin{align} 
\nonumber
\Upsilon = \vligne{M  & s \sqrt{\lambda} \Theta \\ s \sqrt{\lambda} \Theta & M-2 \lambda I}.
\end{align} 
The proof of \eqref{e:fk} is by induction: that equation trivially holds for $k=0$, with
$A_0 = I$ and $B_0 =0$ and, if it also holds for a given value of $k$, then  we
 easily verify that 
$\tilde{\Delta}_\lambda^{k+1}(s) (\vone \otimes I) = \vone \otimes A_{k+1}  + \ve \otimes B_{k+1}$  with 
\begin{align*} 
A_{k+1}  = M A_k + s \sqrt{\lambda} \Theta B_k, \qquad
B_{k+1}  =  s \sqrt{\lambda} \Theta A_k + (M-2\lambda I) B_k,
\end{align*} 
or
\begin{align*} 
\vligne{A_{k+1} \\ B_{k+1}} =
\vligne{ M & s \sqrt{\lambda} \Theta \\
   s \sqrt{\lambda} \Theta & M-2\lambda I}
\vligne{A_{k} \\ B_{k}}.
\end{align*}
Equation (\ref{e:akbk}) readily follows.

To simplify the notation, we define $H_\lambda(t) =
e^{\Delta_\lambda(s) t}$ for the remainder of the proof.  By
(\ref{eqn:eDelta3}), we have
\begin{align}
\nonumber
 { H}_\lambda(t)
& = \frac{1}{2} (\vone\tr \otimes I) \sum_{k = 0}^{\infty} \frac{t^k}{k!} \tilde{\Delta}_{\lambda}^{k}(s) (\vone \otimes I) \nonumber \\
& = 
  \sum_{k \geq 0}  \frac{t^k}{k!}  A_k \nonumber \qquad\qquad  \mbox{from (\ref{e:fk})} \\
\nonumber
& = \vligne{I & 0} \exp{\Upsilon t} \vligne{I \\ 0}.
\end{align}
By Lemma~\ref{t:corner},  $H_\lambda(t)$ is a solution of
\begin{align}
   \label{e:cornerc}
H_\lambda(t) = e^{Mt} + s^2 \lambda \int_0^t \left\{ \int_v^t e^{M(t-u)} \Theta  e^{(M-2\lambda I)(u-v)} \ud u \right\} \Theta H_\lambda(v)  \ud v.
\end{align}
Integrating by parts the inner integral, we find 
\begin{align*}
\lambda   \int_v^t & e^{M(t-u)} \Theta  e^{(M-2\lambda I)(u-v)} \ud u  \\
 =  &\left[ \lambda e^{M(t-u)} \Theta  e^{(M-2\lambda I)(u-v)} (M-2\lambda I)^{-1} \right]_v^t \\
&  + \lambda  \int_v^t M e^{M(t-u)} \Theta  e^{(M-2\lambda I)(u-v)} (M-2\lambda I)^{-1} \ud u 
\\
 = & \; \Theta  e^{(M-2\lambda I)(t-v)} (1/\lambda M-2 I)^{-1} - e^{M(t-v)} \Theta (1/\lambda M-2 I)^{-1} \\
&  + \int_v^t M e^{M(t-u)} \Theta  e^{(M-2\lambda I)(u-v)} (1/\lambda M-2 I)^{-1} \ud u,
\end{align*}
which converges to $1/2  e^{M(t-v)} \Theta $ as $\lambda \rightarrow \infty$. From (\ref{e:cornerc}), we conclude that 
\begin{align*} 
H_\infty(t) = e^{Mt} + (s^2 /2) \int_0^t  e^{M(t-v)} V H_\infty(v)\ud v,
\end{align*}
and therefore
\begin{align*}
\frac{\ud}{\ud t} H_\infty(t) & = (M + (s^2/2) V) H_\infty(t) = \Delta_{Y}(s) H_\infty(t).
\end{align*}
This completes the proof. 
\end{proof}

The Laplace matrix exponent uniquely characterizes the finite-dimensional distributions of the process and therefore Theorem~\ref{theo:mgfMMBM} implies the following result.
		
\begin{cor} \label{cor:findimMMBM} 
The finite-dimensional distributions of $\{L_{\lambda}(t), \varphi_\lambda(t)\}$ converge to the finite-dimensional distributions of the Markov-modulated Brownian motion $\{Y(t), \kappa(t)\}$. 
\qed
\end{cor}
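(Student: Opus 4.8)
The plan is to reduce the claim to the convergence of the joint Laplace transforms at finitely many times, and to evaluate these through the Markov-additive structure of the two processes. Fix $0=t_0<t_1<\dots<t_n$, put $\tau_k=t_k-t_{k-1}$, write $E_j$ for the $m\times m$ diagonal $0$--$1$ matrix that selects coordinate $j$, and let $s_1,\dots,s_n\in\R$. Since $\{Y(t),\kappa(t)\}$ is Markov-additive with matrix exponent $\Delta_Y(s)$, conditioning successively on the phase at $t_1,\dots,t_n$ gives
\[
\E\Bigl[\prod_{k=1}^n e^{s_k\{Y(t_k)-Y(t_{k-1})\}}\mathds 1_{\{\kappa(t_k)=j_k\}}\Bigr]
= \valpha\, e^{\Delta_Y(s_1)\tau_1}E_{j_1}\,e^{\Delta_Y(s_2)\tau_2}E_{j_2}\cdots e^{\Delta_Y(s_n)\tau_n}E_{j_n}\vone .
\]
The triple $\{L_\lambda(t),\beta_\lambda(t),\varphi_\lambda(t)\}$ is likewise Markov-additive, with exponent $\tilde\Delta_\lambda(s)$; carrying out the same conditioning but leaving the $\beta_\lambda$-component of the phase unconstrained (that is, summing it out) yields, with $\vp=\vgamma\otimes\valpha$,
\[
\E\Bigl[\prod_{k=1}^n e^{s_k\{L_\lambda(t_k)-L_\lambda(t_{k-1})\}}\mathds 1_{\{\varphi_\lambda(t_k)=j_k\}}\Bigr]
= \vp\, e^{\tilde\Delta_\lambda(s_1)\tau_1}(I_2\otimes E_{j_1})\cdots e^{\tilde\Delta_\lambda(s_n)\tau_n}(I_2\otimes E_{j_n})\vone .
\]
Over a finite horizon $L_\lambda(t)$ is bounded and $Y(t)$ has all exponential moments, so both transforms are finite for every real $s_k$, and their convergence entails convergence of the finite-dimensional distributions (fix the phase string $(j_1,\dots,j_n)$ and apply the usual transform-to-law implication). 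It thus suffices to show that the two right-hand sides have the same limit as $\lambda\to\infty$.

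The main step will be to strengthen Theorem~\ref{theo:mgfMMBM} to
\begin{equation}
   \label{e:strongconv}
\lim_{\lambda\to\infty} e^{\tilde\Delta_\lambda(s)t} = \tfrac12\,(\vone\vone\tr)\otimes e^{\Delta_Y(s)t},
\qquad s\in\R,\ t\ge 0 ;
\end{equation}
this is the precise sense in which the Laplace matrix exponent of the auxiliary process pins down the limiting finite-dimensional distributions. Let $U$ be the $2\times 2$ matrix with columns $\vone$ and $\ve$; because $U$ diagonalises the generator of $\beta_\lambda$, a direct computation — extending the identity \eqref{e:akbk} from its first block-column to the whole matrix — gives $e^{\tilde\Delta_\lambda(s)t}=(U\otimes I)\,e^{\Upsilon t}\,(U^{-1}\otimes I)$ with $U^{-1}=\tfrac12 U$. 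Hence \eqref{e:strongconv} is equivalent to the assertion that $e^{\Upsilon t}$ converges to the matrix whose only non-zero block is the north-west one, equal to $e^{\Delta_Y(s)t}$: that block is precisely the matrix $H_\lambda(t)$ of Theorem~\ref{theo:mgfMMBM} and converges to $e^{\Delta_Y(s)t}$, while the other three blocks are $O(\lambda^{-1/2})$ by the mechanism already exploited in that proof, namely that the block $M-2\lambda I$ contracts at rate $2\lambda$ and so swamps the $\sqrt\lambda$ multiplying $\Theta$.

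Once \eqref{e:strongconv} is in hand, the fluid transform converges (a finite product of convergent factors, the $I_2\otimes E_{j_k}$ being fixed) to $\vp\prod_{k=1}^n\bigl[(\tfrac12(\vone\vone\tr)\otimes e^{\Delta_Y(s_k)\tau_k})(I_2\otimes E_{j_k})\bigr]\vone$, and a short Kronecker computation finishes the matter: the mixed-product rule $(A\otimes B)(A'\otimes B')=AA'\otimes BB'$ turns the $k$-th bracket into $\tfrac12(\vone\vone\tr)\otimes(e^{\Delta_Y(s_k)\tau_k}E_{j_k})$, the identity $(\vone\vone\tr)^2=2\,\vone\vone\tr$ collapses the product over $k$ to $\tfrac12(\vone\vone\tr)\otimes\bigl(e^{\Delta_Y(s_1)\tau_1}E_{j_1}\cdots e^{\Delta_Y(s_n)\tau_n}E_{j_n}\bigr)$, and contracting with $\vp=\vgamma\otimes\valpha$ on the left and $\vone$ (which factors as $\vone\otimes\vone$) on the right, using $\vgamma\vone=1$ and $\vone\tr\vone=2$, kills the first tensor factor and leaves exactly the MMBM transform displayed above.

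The transform factorisations and the concluding Kronecker algebra are routine once one recognises both processes as Markov-additive. The only genuine point, and where I expect the work to lie, is \eqref{e:strongconv}: although the fast phase $\beta_\lambda$ itself has no limit, its imprint on $e^{\tilde\Delta_\lambda(s)t}$ — everything not already controlled through $e^{\tilde\Delta_\lambda(s)t}(\vone\otimes I)$ in the proof of Theorem~\ref{theo:mgfMMBM} — must be shown to vanish, which is a short variant of the estimate used there; the remaining care is purely bookkeeping of the Kronecker factors when $\beta_\lambda$ is marginalised at each intermediate time.
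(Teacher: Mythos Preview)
Your argument is correct and considerably more careful than the paper's. The paper offers no proof beyond the sentence preceding the corollary: it asserts that ``the Laplace matrix exponent uniquely characterizes the finite-dimensional distributions'' and leaves it at that, taking the statement as an immediate consequence of Theorem~\ref{theo:mgfMMBM}.

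You have put your finger on the genuine subtlety the paper elides. The assertion that the matrix exponent determines all finite-dimensional distributions is standard for a Markov-additive process, but $\{L_\lambda,\varphi_\lambda\}$ is \emph{not} Markov-additive: the hidden component $\beta_\lambda$ breaks the Markov property, so the one-time transforms $e^{\Delta_\lambda(s)t}$ (which average over $\beta_\lambda$) do not compose into multi-time transforms. Your remedy---working instead with the full MAP $\{L_\lambda,\beta_\lambda,\varphi_\lambda\}$, strengthening Theorem~\ref{theo:mgfMMBM} to the blockwise limit \eqref{e:strongconv}, and then marginalising $\beta_\lambda$ at each intermediate time via the Kronecker algebra---is exactly what is needed to make the one-liner rigorous. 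The diagonalisation $e^{\tilde\Delta_\lambda(s)t}=(U\otimes I)e^{\Upsilon t}(U^{-1}\otimes I)$ is correct, and the vanishing of the three remaining blocks of $e^{\Upsilon t}$ for $t>0$ does follow by the same integration-by-parts mechanism as in the proof of Theorem~\ref{theo:mgfMMBM} (your $O(\lambda^{-1/2})$ claim checks out for the off-diagonal blocks; the south-east block is even smaller). What the paper's brevity buys is economy; what your approach buys is an honest proof.
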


\begin{theo} \label{theo:tightness}
The family $\{L_{\lambda}(t), \varphi_{\lambda}(t): t \geq 0\}$ is tight. 
\end{theo}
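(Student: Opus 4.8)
The plan is to establish tightness of the family $\{L_\lambda(t),\varphi_\lambda(t):t\ge 0\}$ in the Skorokhod space $D([0,\infty),\mathbb{R}\times\mathcal{M})$ by treating the two coordinates separately. Since $\mathcal{M}$ is finite, the phase component $\varphi_\lambda$ is automatically tight (any family of $\mathcal{M}$-valued c\`adl\`ag processes is), so the real work is to show tightness of the real-valued level process $\{L_\lambda(t):t\ge0\}$. For this I would verify the two standard conditions (as in Billingsley, or via Aldous' criterion): first, that for each fixed $t$ the random variables $L_\lambda(t)$ are tight in $\mathbb{R}$, and second, a control on the modulus of continuity / oscillation of $L_\lambda$ uniformly in $\lambda$.

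The first condition follows from the moment-generating-function convergence already proved. By Theorem~\ref{theo:mgfMMBM}, $\E[e^{sL_\lambda(t)}\mathds{1}_{\{\varphi_\lambda(t)=j\}}]\to\E[e^{sY(t)}\mathds{1}_{\{\kappa(t)=j\}}]$ for each $s$; evaluating at small $|s|>0$ and using that the limit is finite gives a uniform-in-$\lambda$ bound on $\E[e^{sL_\lambda(t)}]$ for $s$ in a neighbourhood of $0$, whence $\{L_\lambda(t)\}_\lambda$ is tight (indeed uniformly integrable) for each fixed $t$. For the oscillation bound, I would work on increments. Conditionally on the phase path, $L_\lambda(t)-L_\lambda(r)$ is a sum over the sojourn intervals of $(\beta_\lambda,\varphi_\lambda)$ in $(r,t)$ of deterministic linear pieces with slopes $\mu_i\pm\sqrt\lambda\,\sigma_i$. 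Because $\beta_\lambda$ switches between the two copies at rate $\lambda$, the large $\pm\sqrt\lambda\,\sigma_i$ slopes alternate in sign on intervals of mean length $1/\lambda$, so the net contribution over a window of length $\delta$ behaves like a sum of roughly $\lambda\delta$ nearly-cancelling terms of size $\sqrt\lambda/\lambda=\lambda^{-1/2}$, i.e. is $O(\sqrt\delta)$ in an $L^2$ sense — exactly the Brownian scaling. I would make this precise by computing $\E[(L_\lambda(t)-L_\lambda(r))^2\mid \varphi\text{-path}]$, or more robustly a fourth moment $\E[(L_\lambda(t)-L_\lambda(r))^4]\le K(t-r)^2$ with $K$ independent of $\lambda$, using the explicit generator $T_\lambda$ and rate matrix $C_\lambda$; Kolmogorov's tightness criterion (continuity version, since these processes are in fact continuous) then gives uniform control of the modulus of continuity and hence tightness. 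Alternatively one can invoke Aldous' criterion: for stopping times $\tau_\lambda\le T$ and $\delta\to0$, $L_\lambda(\tau_\lambda+\delta)-L_\lambda(\tau_\lambda)\to0$ in probability uniformly in $\lambda$, which again reduces to the same second-moment increment estimate together with the strong Markov property of $(L_\lambda,\beta_\lambda,\varphi_\lambda)$.

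The main obstacle I anticipate is precisely this uniform-in-$\lambda$ increment estimate: the individual slopes $\pm\sqrt\lambda\,\sigma_i$ blow up, so a na\"ive bound on $|L_\lambda(t)-L_\lambda(r)|$ is useless, and one must genuinely exploit the cancellation coming from the rate-$\lambda$ switching of $\beta_\lambda$ between the $+$ and $-$ copies. The cleanest route is to condition on the $\kappa$-marginal phase path, reducing to the single-phase picture of Remark~\ref{rem:re�urs} where $\{L^i_\lambda\}$ is a two-state fluid process with generator $T^i_\lambda$ and rates $\mu_i\pm\sqrt\lambda\,\sigma_i$; there the increment moments can be computed exactly (the occupation-time difference between the two states over $[0,\delta]$ is a well-understood functional of a two-state chain, with variance $\sim\delta/\lambda$), giving $\E[(L^i_\lambda(\delta)-L^i_\lambda(0))^2]=\mu_i^2\delta^2+O(\sigma_i^2\delta)$ uniformly in $\lambda$. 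Assembling these across phases via the recursive structure \eqref{eqn:recY} and summing yields the required $\E[(L_\lambda(t)-L_\lambda(r))^2]\le K(t-r)$ for $|t-r|\le1$, which closes the argument.
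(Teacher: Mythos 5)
Your route is sound, but it is genuinely different from the paper's. The paper proves no moment estimates at all: it passes to the per-phase two-state approximations $\{L^i_\lambda\}$ (the alternative interpretation given in the remark of Section~2), quotes Ramaswami \cite[Theorem~5]{ram11} for tightness of each family $\{L^i_\lambda\}$ on compact intervals, extends to $[0,\infty)$ via Whitt \cite[Corollary~7]{whitt70}, and then controls $L_\lambda$ by the dominating process $\widetilde L_\lambda(t)=\sum_i\sup_{s\le t}|L^i_\lambda(s)|$, using the continuous mapping theorem on the supremum functionals. You instead prove tightness from scratch through increment moments: since $\beta_\lambda$ and $\varphi_\lambda$ evolve independently and $\beta_\lambda$ flips at rate $\lambda$, the sign process $s(u)=\pm1$ has correlation $\E[s(u)s(v)]=e^{-2\lambda|u-v|}$, so that, conditionally on the $\varphi$-path and uniformly in the initial phase, $\E[(L_\lambda(t)-L_\lambda(r))^2]\le 2\mu_{\max}^2(t-r)^2+2\sigma_{\max}^2(t-r)$ with constants independent of $\lambda$ --- exactly the cancellation you describe --- and the analogous fourth-moment bound is $O((t-r)^2)$. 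Combined with marginal tightness at fixed times (which indeed follows from Theorem~\ref{theo:mgfMMBM}, or simply from convergence of the finite-dimensional distributions) and the strong Markov property, this yields tightness by Aldous' or Kolmogorov's criterion. Your argument is self-contained and quantitative where the paper's is short but leans on the cited external results and a rather coarse domination step; the price is that you must actually carry out the correlation/moment computation.

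Two slips should be corrected in a final write-up. First, your justification for the phase coordinate is false as stated: a family of finite-state c\`adl\`ag processes is \emph{not} automatically tight in the Skorokhod topology --- the copy indicator $\beta_\lambda$, which flips at rate $\lambda$, is a counterexample within this very model. What saves the claim is that the marginal $\varphi_\lambda$ has generator $Q$ and initial law $\valpha$ for every $\lambda$, so its law does not depend on $\lambda$ at all. Second, the bound $\E[(L_\lambda(t)-L_\lambda(r))^2]\le K(t-r)$ does not suffice for Kolmogorov's tightness criterion, which requires an exponent strictly larger than $1$ in $(t-r)$; the second-moment estimate ``closes the argument'' only through Aldous' criterion (as you note in passing), and if you prefer the Kolmogorov route you must complete the fourth-moment bound $\E[(L_\lambda(t)-L_\lambda(r))^4]\le K(t-r)^2$. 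Neither point is fatal, but both need to be stated correctly.
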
	

\begin{proof} In this proof, we use alternative interpretation of our fluid-based approximation as outlined in Remark~\ref{rem:reurs}. 

The proof of~\cite[Theorem~5]{ram11} shows that for each $i \in \mathcal{M}$ the family of marginal processes $\{L^{i}_{\lambda}\}$ is tight on any compact interval of $[0,\infty)$. By~\cite[Corollary~7]{whitt70}, we can extend this result to show that the family of marginal processes $\{L^{i}_{\lambda}\}$ is tight on $[0,\infty)$. 

As the family of $\{L_{\lambda}^i\}$ converges weakly to the Brownian motion $\{Y^i\}$ with $\mu_i$ and $\sigma_i$, so do the processes $\{\sup_{s < t} |L_{\lambda}^i(t)|\}$ to the corresponding supremum process $\{\sup_{s < t} |Y^i(t)|\}$, by the continuous mapping theorem. Thus, the family $\{\sup_{s < t} |Y^i(t)|\}$ is tight on $[0,\infty)$. 

Next, we define $\widetilde{L}_{\lambda}(t)$ for $t \geq 0$ as 
\begin{align}
	\widetilde{L}_{\lambda}(t) = \sum_{i = 1}^{m}\sup_{s \leq t} |L_{\lambda}^i(t)|.  
\end{align} 
As $L_\lambda$ is stochastically dominated by $\widetilde  L_{\lambda}$, the tightness property of the family of $\{L_{\lambda}\}$ follows from the tightness property of the family $\{\sup\limits_{s < t} L_{\lambda}^{i}(t)\}$ for all $i \in \mathcal{S}$.  \end{proof} 

The next theorem follows from Corollary~\ref{cor:findimMMBM} and Theorem~\ref{theo:tightness}.
\begin{theo}\label{theo:wc} 
The processes $\{L_{\lambda}(t),\varphi_{\lambda}(t): t \geq 0\}$
converge weakly to the Markov-modulated Brownian motion
$\{Y(t),\kappa(t): t \geq 0\}$. 
\qed
\end{theo}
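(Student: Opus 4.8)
The plan is to derive weak convergence on the Skorokhod space $D[0,\infty)$ from the standard two ingredients: convergence of finite-dimensional distributions together with tightness. This is precisely the content of Prokhorov's theorem in the form given, for instance, in Billingsley or in Ethier and Kurtz: a sequence of processes with values in $D[0,\infty)$ that is tight and whose finite-dimensional distributions converge (at least along a dense set of time points including $0$) converges weakly to the process with those finite-dimensional marginals.

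First I would invoke Corollary~\ref{cor:findimMMBM}, which already furnishes convergence of the finite-dimensional distributions of $\{L_\lambda(t),\varphi_\lambda(t)\}$ to those of $\{Y(t),\kappa(t)\}$. Second, I would invoke Theorem~\ref{theo:tightness}, which asserts that the family $\{L_\lambda(t),\varphi_\lambda(t): t\geq 0\}$ is tight in $D[0,\infty)$. Combining these, every subsequential weak limit must have the same finite-dimensional distributions as $\{Y(t),\kappa(t)\}$; since the finite-dimensional distributions determine the law on $D[0,\infty)$ uniquely, the full sequence converges weakly to $\{Y(t),\kappa(t)\}$. One should perhaps remark that the phase component $\varphi_\lambda$ takes values in the finite set $\mathcal{M}$, so tightness of the pair reduces to tightness of the level component $L_\lambda$ — which is what Theorem~\ref{theo:tightness} provides — while the convergence of the joint finite-dimensional distributions is exactly what Corollary~\ref{cor:findimMMBM} gives, so no extra work is needed to handle the two-component nature of the process.

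There is essentially no obstacle here: the theorem is a direct consequence of two results already established, and the only point requiring a modicum of care is the bookkeeping that ensures the hypotheses of the abstract weak-convergence criterion (tightness plus convergence of finite-dimensional distributions on a separating class of time points, with $0$ included because of the initial-condition Assumption~\ref{ass:fluid}) are all in place. Since the statement of Theorem~\ref{theo:wc} in the excerpt is already followed by \qed, the intended proof is exactly this one-line combination, and I would simply write it out as: "This follows immediately from Corollary~\ref{cor:findimMMBM}, Theorem~\ref{theo:tightness}, and the standard fact that tightness together with convergence of finite-dimensional distributions implies weak convergence on $D[0,\infty)$."
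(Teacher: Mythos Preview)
Your proposal is correct and matches the paper's approach exactly: the paper states just before Theorem~\ref{theo:wc} that it ``follows from Corollary~\ref{cor:findimMMBM} and Theorem~\ref{theo:tightness}'' and gives no further argument beyond the \qed. Your additional remarks about Prokhorov's theorem and the finite phase space are accurate elaborations of what the paper leaves implicit.
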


\section{Stationary distribution}
   	\label{sec:statdis} 
       
We consider again the Markov-modulated Brownian motion $\{Y(t),
\kappa(t): t \geq 0\}$ described in Section~\ref{sec:fluid-to-mmbm},
but with a reflection at level zero. The reflected process is denoted
as $\{\widehat{Y}(t), \kappa(t): t \geq 0\}$, with
\begin{align*}
\widehat{Y}(t) = Y(t) - \inf_{0 \leq v \leq t} Y(v).
\end{align*}
Furthermore, we define the
reflected fluid process $\{\widehat{L}_\lambda(t), \beta_{\lambda}(t),
\varphi_{\lambda}(t): t \geq 0\}$, where 
$$\widehat{L}_{\lambda}(t) =
L_{\lambda}(t) - \inf_{0 \leq v \leq t}L_{\lambda}(v).$$

For notational convenience,  we define
$\varepsilon = 1/\sqrt{\lambda}$.    With this, the reflected fluid
processes is written as $\{\widehat{L}_\epsi(t), \beta_{\epsi}(t),
\varphi_{\epsi}(t): t \geq 0\}$ and our purpose is to show that the
stationary distribution of $\{\widehat{Y}(t), \kappa(t)\}$ is the
limit, as $\epsi \rightarrow 0$, of the stationary distribution of the
reflected fluid process $\{\widehat{L}_\epsi(t),
\varphi_{\epsi}(t)\}$.  We emphasize that the
processes $\{\widehat{L}_\lambda(t),
\varphi_{\lambda }(t)\}$ and $\{\widehat{L}_\epsi(t),
\varphi_{\epsi}(t)\}$ are the same. The change in subscripts only
reflects the notational change in our perturbation parameter.

%

\begin{ass} 
   \label{a:drift}
%
The mean drift  $\valpha D \vone$ is negative, so
   that all reflected processes are positive recurrent.
\end{ass} 

\begin{ass} 
	\label{ass:possigma}
 The variance $\sigma^2_i$ is positive for all $i \in
  \mathcal{M}$. This assumption ensures the existence of $\Theta^{-1}$, which we need later on. \end{ass}

The following result is a direct corollary of Theorem~\ref{theo:wc}, by the Skorokhod mapping theorem. 

\begin{cor} The processes
  $\{\widehat{L}_{\epsi}(t),\varphi_{\epsi}(t): t \geq 0\}$ weakly
  converge as $\epsi \rightarrow 0$  to the reflected
  Markov-modulated Brownian motion $\{\widehat{Y}(t),\kappa(t): t \geq
  0\}$.  
\qed
\end{cor}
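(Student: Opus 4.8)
The plan is to deduce this from Theorem~\ref{theo:wc} via the continuous mapping theorem, the relevant map being the one-sided Skorokhod reflection at level zero applied to the level coordinate only. By Theorem~\ref{theo:wc}, the processes $\{(L_\epsi(t),\varphi_\epsi(t)): t\ge 0\}$ converge weakly, as $\epsi\to 0$, to $\{(Y(t),\kappa(t)): t\ge 0\}$ in the Skorokhod space $D([0,\infty),\R\times\mathcal M)$. Define the reflection operator $\Phi$ on the level coordinate by $\Phi(x)(t)=x(t)-\inf_{0\le v\le t}x(v)$; since all sample paths under consideration start at level zero, this is precisely the operation yielding $\widehat L_\epsi=\Phi(L_\epsi)$ and $\widehat Y=\Phi(Y)$, while the phase coordinate is left untouched. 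So the object to study is the composite map $(x,\psi)\mapsto(\Phi(x),\psi)$.

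Next I would invoke the classical Skorokhod mapping theorem: $\Phi$ is the one-dimensional regulator map, which is Lipschitz continuous for the topology of uniform convergence on compact intervals and continuous for the $J_1$ topology on $D([0,\infty),\R)$; keeping the phase coordinate fixed preserves continuity, so $(x,\psi)\mapsto(\Phi(x),\psi)$ is continuous. The continuous mapping theorem needs this map to be continuous only on a set of full probability under the law of the limit $(Y,\kappa)$, and since the Brownian-type process $Y$ has almost surely continuous paths, it lies among the continuity points of $\Phi$ with probability one. Consequently $\{(\widehat L_\epsi(t),\varphi_\epsi(t))\}=\{(\Phi(L_\epsi)(t),\varphi_\epsi(t))\}$ converges weakly to $\{(\Phi(Y)(t),\kappa(t))\}=\{(\widehat Y(t),\kappa(t))\}$, which is the assertion.

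The only genuine subtlety — the ``main obstacle'' — is the topological bookkeeping: one must ensure that the weak convergence in Theorem~\ref{theo:wc} is stated on a space on which ``reflect the first coordinate, keep the second'' is continuous, and that the paths at which the reflection functional is discontinuous are not charged by the limit law. Both concerns dissolve thanks to the almost sure path-continuity of $Y$; in fact one could equally carry the level coordinate in $C([0,\infty),\R)$, on which $\Phi$ is globally Lipschitz in the supremum norm, and retain only the phase coordinate in $D([0,\infty),\mathcal M)$. The remainder is a routine application of standard weak-convergence machinery.
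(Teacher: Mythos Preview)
Your argument is correct and is exactly the approach the paper indicates: the corollary is stated without proof as a direct consequence of Theorem~\ref{theo:wc} ``by the Skorokhod mapping theorem,'' i.e., the continuous mapping theorem applied to the one-sided reflection map on the level coordinate. Your added care about the phase coordinate and about continuity points of the regulator map is accurate and simply makes explicit what the paper leaves implicit.
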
 

We denote the stationary distribution vector of $\{\widehat{L}_{\varepsilon}(t), \beta_{\varepsilon}(t), \varphi_{\varepsilon}(t)\}$ by $\bs{F}_{\varepsilon}(x)$
and the associated stationary density vector by $\bs{\pi}_{\varepsilon}(x)$, with components 
\[
[F_{\varepsilon}(x)]_{ki}  = \lim_{t \rightarrow \infty} \P[\widehat{L}_\epsi(t) \leq x, \beta_{\varepsilon}(t) = k, \varphi_{\varepsilon}(t) = i],
\]
{and} 
$ [{\pi}_{\varepsilon}(x)]_{ki}  = \ud/\hspace*{-0.05cm}\ud x
[F_{\varepsilon}(x)]_{ki}$, for $k \in \{1,2\}$ and $i \in
\mathcal{M}$, and we partition the generator and the fluid rate matrices as
\begin{align*} 
T_\epsi = \vligne{T_{\epsi}^{++} &  T_{\epsi}^{+-}  \\ T_{\epsi}^{-+}  & T_{\epsi}^{--} }
\qquad \mbox{and} \qquad
C_\epsi = \vligne{C_{\epsi}^{+} & 0 \\ 0 & C_{\epsi}^{-}}, 
\end{align*}
where 
\begin{align*} 
&T_{\epsi}^{++}  = T_{\epsi}^{--} = Q - (1/\epsi)^2 I,  && C_{\epsi}^{+}  = D + (1/\epsi)\Theta,\\ 
&T_{\epsi}^{+-} = T_{\epsi}^{-+} = (1/\epsi)^2 I,  && C_{\epsi}^{-} = D - (1/\epsi)\Theta.
\end{align*} 
We assume that $\epsi$ is sufficiently small that the diagonal
elements of $C_{\epsi}^{+}$ are all positive, and those of
$C_{\epsi}^{-}$ are all negative, and we write $|C_{\epsi}^{-}|$  for
the matrix of absolute values of the elements of  $C_{\epsi}^{-}$. 

Let $\xi_\epsi^{+}(x) = \inf \{t < \infty: L_\epsi (t) > x\}$ and
$\xi_\epsi^{-}(x) = \inf \{t < \infty: L_\epsi (t) < x\}$ be the first
passage times to the level $x$, respectively from below and from
above, of the unbounded process $L_\epsi$. A key component of the stationary distribution of  $\{\widehat{L}_{\varepsilon}(t),
\beta_{\varepsilon}(t), \varphi_{\varepsilon}(t)\}$ is the matrix
$\Psi_\epsi$ of first passage probability from above, that is, 
\begin{align*} 
(\Psi_\epsi)_{ij} & =  \P[\xi_\epsi^{-}(x)< \infty, \beta_\epsi(\xi_\epsi^{-}(x)) = 2,
\varphi_\epsi(\xi_\epsi^{+}(x))=j\\
 & \qquad \quad | L_\epsi(0) = x, \beta_\epsi(0) = 1, \varphi_\epsi(0)=i]
\end{align*} 
for $i$ and $j$ in $\mathcal M$, and any level $x$.  Similarly, $\Psi_\epsi^*$ is the matrix of first passage probabilities from below, from $(x, 2, i)$ to $(x, 1,
j)$, for $i$ and $j$ in $\mathcal M$. 

The stationary distribution is given in the literature under various slightly different forms; here,  we use the
one from  Govorun \emph{et al.}~\cite[Theorem~2.1]{glr11b}:
\begin{align} 
   \label{eqn:smass}
\bs{F}_{\varepsilon}(0) & = \vligne{\bs{0} & \bs{\zeta}_{\varepsilon}}, \\
\label{eqn:sdensity} 
\bs{\pi}_{\varepsilon}(x) & = \bs{\zeta}_{\varepsilon}T^{-+}_{\varepsilon}e^{K_{\varepsilon} x}\vligne{(C^{+}_{\varepsilon})^{-1} & \Psi_{\varepsilon} |C_{\varepsilon}^{-}|^{-1}} \quad \mbox{ for } x > 0,
\end{align} 
where
\begin{equation}
   \label{eq:kepsi}
K_{\varepsilon} = (C_{\varepsilon}^{+})^{-1}T_{\varepsilon}^{++} + \Psi_{\varepsilon}|C_{\varepsilon}^{-}|^{-1}T^{-+}_{\varepsilon},
\end{equation}
and $\bs{\zeta}_{\varepsilon}$ is the unique solution of 
\begin{align} \label{eqn:zeta} 
& \bs{\zeta}_{\varepsilon} (T^{--}_{\varepsilon} + T^{-+}_{\varepsilon}\Psi_{\varepsilon}) = \bs{0}, \\
\label{eqn:zetanorm} 
& \bs{\zeta}_{\varepsilon} \bs{1} + \bs{\zeta}_{\varepsilon}T_{\varepsilon}^{-+}(-K_{\varepsilon})^{-1}\{(C^{+}_{\varepsilon})^{-1}\bs{1} + \Psi_{\varepsilon} |C_{\varepsilon}^{-}|^{-1} \bs{1}  \}= 1.
\end{align} 
Probabilistically, $\bs{\zeta}_{\varepsilon}$ is up to a multiplicative constant the stationary distribution of the
process censored at level~$0$, and $e^{K_\epsi x}$ is the matrix of expected number of crossings of level
$x$ in the various phases $(1,i)$, starting from level $0$, before the first return to level $0$.

In view of (\ref{eqn:sdensity}), we need to analyze $\Psi_\epsi$,
$K_\epsi$ and $\zeta_\epsi$ as $\epsi \rightarrow 0$, and it is obvious
from (\ref{eq:kepsi}) and (\ref{eqn:zeta}, \ref{eqn:zetanorm}) 
that we should focus on the matrix $\Psi_{\varepsilon}$ first.
The next lemma is the key to our analysis. One might expect
(\ref{eqn:Psieps}, \ref{eqn:Psistareps})  to have
a  simple proof but the one we have is lengthy and tedious.  We
place it in Appendix to preserve the flow of the paper.
Lemma~\ref{th:k} and Theorem~\ref{prop:limpi}  easily follow. 

\begin{lem} 
	\label{lem:expansions} 
For $\varepsilon \geq 0$,  
\begin{align} 
\Psi_{\varepsilon} = I + \varepsilon\Psi_1 + O(\varepsilon^2),  \label{eqn:Psieps} \\
\Psi^*_{\varepsilon} = I + \varepsilon\Psi^*_1 + O(\varepsilon^2), \label{eqn:Psistareps}
\end{align} 
where $\Theta^{-1}\Psi_1$ and $-\Theta^{-1}\Psi_1^*$ are both solutions to 
\begin{align} \label{eqn:theequation}
 X^2 + 2V^{-1}D X + 2V^{-1}Q= 0,
\end{align}
and are irreducible.
Furthermore,  the roots  $\theta_1, \theta_2, \ldots,$ $\theta_{2m}$ of the polynomial
\begin{align}
\gamma(z) = \det (z^2 I + 2z V^{-1}D +  2V^{-1}Q) \nonumber   
\end{align} 
associated to (\ref{eqn:theequation}),  numbered in increasing order
of their real parts, satisfy the inequalities
\begin{align}
   \label{e:splitting}
\mbox{Re}(\theta_1) \leq \cdots \leq \mbox{Re}(\theta_{m - 1}) < \theta_m  = 0  < \mbox{Re}(\theta_{m + 1}) \leq \cdots \leq \mbox{Re}(\theta_{2m}). 
\end{align} 
Finally, $\Theta^{-1}\Psi_1$ has one eigenvalue equal to zero and $m-1$ eigenvalues with strictly negative real parts, and it is the unique such solution; $-\Theta^{-1}\Psi_1^*$ has $m$ eigenvalues with strictly positive real parts, and is the unique such solution.
\qed  
\end{lem}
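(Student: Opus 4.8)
## Proof proposal for Lemma~\ref{lem:expansions}

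The plan is to obtain the expansion \eqref{eqn:Psieps} by exploiting the probabilistic characterization of $\Psi_\epsi$ as a first-passage matrix and the fact that, as $\epsi\to 0$, the fluid process $L_\epsi$ becomes the MMBM $Y$, for which the downward first-passage probability matrix is the identity (since $Y$ is a continuous process that cannot skip levels, the phase at the first passage below $x$ equals the phase at time $0$, whence $\Psi_0 = I$). To get the first-order term, I would write down the integral (Volterra) equation satisfied by $\Psi_\epsi$ — the standard fixed-point equation for first-passage matrices in fluid queues, which in the present notation reads (schematically) $\Psi_\epsi = \int_0^\infty e^{A_\epsi^{+} t}\,\bigl(\text{rate into }\Psi_\epsi\bigr)\,\ud t$ with $A_\epsi^{+}=(C_\epsi^{+})^{-1}T_\epsi^{++}$ and the coupling through $T_\epsi^{+-}$, $(C_\epsi^{-})^{-1}$, $T_\epsi^{-+}$. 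Substituting $\Psi_\epsi = I + \epsi\Psi_1 + O(\epsi^2)$, expanding each matrix in powers of $\epsi$ (using $C_\epsi^{\pm} = \pm(1/\epsi)\Theta + D$, $T_\epsi^{++}=Q-(1/\epsi)^2 I$, $T_\epsi^{-+}=(1/\epsi)^2 I$), and collecting the $O(\epsi)$ terms, one should be led after some manipulation to a matrix-quadratic identity for $\Psi_1$; a parallel computation with the roles of $+$ and $-$ interchanged gives the companion statement for $\Psi_1^*$. The cleanest bookkeeping is probably to work with $\Theta^{-1}\Psi_1$ directly, since the claim is that \emph{this} matrix, call it $U$, satisfies $U^2 + 2V^{-1}DU + 2V^{-1}Q = 0$; note $V^{-1}=\Theta^{-2}$ by Assumption~\ref{ass:possigma}.

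Next I would identify the spectral splitting \eqref{e:splitting}. The polynomial $\gamma(z)=\det(z^2 I + 2zV^{-1}D + 2V^{-1}Q)$ has degree $2m$. That $z=0$ is a root follows from $\det(2V^{-1}Q)=0$, because $Q$ is a generator and hence singular; irreducibility of $Q$ makes $0$ a simple eigenvalue of $Q$, and I would check it remains a simple root of $\gamma$. The location of the other roots is the classical dichotomy for MMBM, and I would derive it from the known fact that $\gamma(z)=0$ is equivalent to $z$ being an eigenvalue of the MMBM's matrix exponent $\Delta_Y(-z)$ or, more precisely, that $z^2 I/2 + zD + Q$ (up to the factor $V/2$ and left-multiplication by $V^{-1}$) is singular; the Perron–Frobenius / convexity argument — considering $\operatorname{sp}(zD+Q+\tfrac12 z^2 V)$ as a function of real $z$ together with the negative mean drift Assumption~\ref{a:drift} — yields exactly $m-1$ roots with negative real part, a simple root at $0$, and $m$ roots with positive real part. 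I would cite the standard MMBM literature (e.g.\ Asmussen~\cite{asmussen95}, Rogers~\cite{roger94}, D'Auria \emph{et al.}~\cite{dikm12}) for this splitting rather than reprove it.

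Finally, for the eigenvalue count of $\Theta^{-1}\Psi_1$ and its uniqueness as a solution: I would argue that $\Theta^{-1}\Psi_1$ must be the \emph{minimal} (or ``stable-subspace'') solution of \eqref{eqn:theequation}, using that $\Psi_\epsi$ itself is the minimal nonnegative solution of its defining quadratic equation in the fluid model, and that minimality is preserved under the $\epsi\to0$ limit. The eigenvalues of the stable solution of $X^2 + 2V^{-1}DX + 2V^{-1}Q=0$ are precisely the roots $\theta_1,\dots,\theta_m$ of $\gamma$ with nonpositive real part, i.e.\ $m-1$ with strictly negative real part and the simple root $0$ — giving the zero eigenvalue and $m-1$ eigenvalues in the open left half-plane. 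Uniqueness of a solution with this spectrum follows from the standard correspondence between solutions of a matrix-quadratic and $m$-dimensional invariant subspaces of the associated $2m\times2m$ linearization $\bigl[\begin{smallmatrix}0 & I\\ -2V^{-1}Q & -2V^{-1}D\end{smallmatrix}\bigr]$ together with the simplicity of the root at $0$. The symmetric statement for $-\Theta^{-1}\Psi_1^*$ (all eigenvalues with strictly positive real part, hence the unstable-subspace solution) is obtained the same way with $\theta_{m+1},\dots,\theta_{2m}$. Irreducibility of the two solutions I would get from the irreducibility of $Q$ propagating through the construction, or from the probabilistic interpretation of $\Psi_1,\Psi_1^*$ as (generators of) taboo processes on $\mathcal M$.

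The main obstacle I anticipate is the first step: carrying out the $\epsi$-expansion of the Volterra/fixed-point equation for $\Psi_\epsi$ rigorously. The coefficients blow up ($1/\epsi$ and $1/\epsi^2$ terms appear), so one cannot just formally match orders — one needs genuine control of the remainder $O(\epsi^2)$, which requires a careful a priori estimate on $\Psi_\epsi - I$ (a bound of order $\epsi$, e.g.\ via coupling with the MMBM or a direct contraction argument on the integral equation), and then a stability argument showing the expansion is self-consistent. This is presumably why the authors describe their proof as ``lengthy and tedious'' and relegate it to the appendix.
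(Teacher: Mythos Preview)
Your approach diverges from the paper's at the crucial point you yourself flag, and the gap there is real rather than merely cosmetic. The paper does not expand an integral equation and does not postulate an ansatz $\Psi_\epsi = I + \epsi\Psi_1 + O(\epsi^2)$ to match orders. It works directly with the algebraic Riccati equation for $\Psi_\epsi$ and, writing $\Psi_\epsi = I + \Phi_\epsi$, proceeds by compactness and case analysis: (i) compactness of the set of substochastic matrices gives convergent subsequences of $\Psi_\epsi$, and a nilpotency/trace argument on $(I-\bar\Psi)\Theta^{-1}(I-\bar\Psi)$ forces every subsequential limit to be $I$; (ii) $\Phi_\epsi$ is shown to be an irreducible generator and $\Phi_\epsi^*$ a strict subgenerator (the latter using Assumption~\ref{a:drift}); (iii) boundedness of $(1/\epsi)\Phi_\epsi$ is established by a trichotomy --- if it tended to $0$ one would obtain $V^{-1}Q=0$, and if it were unbounded a trace bound on the rescaled generator gives a contradiction --- leaving only the bounded, nonvanishing case; (iv) every subsequential limit of $(1/\epsi)\Phi_\epsi$ is then shown to satisfy \eqref{eqn:theequation}. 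The authors note explicitly that the Implicit Function Theorem fails at $(\epsi,X)=(0,I)$, which is exactly why your formal-expansion route is blocked; your proposed rescue via ``coupling with the MMBM or a direct contraction argument'' is not what the paper does and, as stated, is too vague to be a proof --- any contraction would have to survive $1/\epsi^2$ coefficients, which is precisely the obstruction.

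The splitting \eqref{e:splitting} is also proved internally, not cited, and the logical order is opposite to yours. The paper factors $\Gamma(z)=Vz^2+2Dz+2Q$ twice: once as $(Vz+V\Theta^{-1}\bar\Psi_1+2D)(zI-\Theta^{-1}\bar\Psi_1)$ through any subsequential limit $\bar\Psi_1$ (an irreducible generator, hence contributing a simple eigenvalue $0$ and $m-1$ eigenvalues in the open left half-plane to $\det\Gamma$), and once through $\bar\Psi_1^*$ (a strict subgenerator, hence contributing $m$ eigenvalues in the open right half-plane). Counting to $2m$ gives \eqref{e:splitting}. Only \emph{then} does the spectrum pin down $\bar\Psi_1$ and $\bar\Psi_1^*$ uniquely, and that uniqueness is what upgrades subsequential convergence to convergence of $(1/\epsi)\Phi_\epsi$. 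So whereas you want to first identify $\Psi_1$, then invoke the splitting, then deduce uniqueness, the paper first extracts candidate limits by compactness, uses their generator/subgenerator structure to \emph{prove} the splitting, and finally uses the splitting to identify the limits. Your ``minimality is preserved in the limit'' step is likewise asserted but not argued, and it is not obvious.
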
 

\begin{rem} \emph{Let $\tau^{\pm}_x = \inf \{t < \infty: \pm Y(t) > x\}$ be the first passage times to the corresponding levels $x$ and $-x$ of the unbounded process $Y(t)$. Under Assumption~\ref{ass:possigma} that $\sigma_i > 0$ for all $i \in \mathcal{M}$, it is easy to confirm that $\Theta^{-1}\Psi_1$ and $\Theta^{-1}\Psi_1^*$ are the same as, respectively, the generators $\Lambda^{-}$ and $\Lambda^{+}$ of the time-changed processes $\kappa(\tau_x^-)$ and $\kappa(\tau_x^+)$ in Ivanovs~\cite{ivanovs-japs10}, and $\Theta^{-1}\Psi_1^*$ is the same as the matrix $U(\gamma)$ for $\gamma = 0$ in Breuer~\cite{breuer08}.}
\end{rem}

By Lemma~\ref{lem:expansions},
the matrices $\Psi_1$ and $\Psi_1^*$ are uniquely identified
through~\eqref{eqn:theequation}.  We now turn to the matrix $K_\epsi$,
and, to complete the picture, to the matrix $K_\epsi^*$ defined as
\begin{equation}
   \label{eq:kepsist}
K^*_{\varepsilon} = \Psi_{\varepsilon}^*(C_{\lambda}^{+})^{-1}T_{\lambda}^{+-} + |C_{\lambda}^{-1}|^{-1}T_{\lambda}^{--}. 
\end{equation}

\begin{lem}
   \label{th:k}
For $\epsi \geq 0$, 
\begin{align}
   \label{eq:kepsilon}
K_\epsi & = K_0 + O(\epsi), \\
   \label{eq:kstepsilon}
K^*_\epsi & = K^*_0 + O(\epsi),
\end{align}
 where $K_0 = \Psi_1 \Theta^{-1} + 2V^{-1}D$ and $K^*_0 =  \Psi^*_1  \Theta^{-1} - 2V^{-1}D$.  
The matrices  $-K_0$ and 
 $K^*_0$ are solutions of the equation
\begin{align} \label{eqn:forK0}
X^2 + 2 X V^{-1}D + 2 \Theta^{-1}Q \Theta^{-1} = 0,
\end{align} 
and are irreducible.
Furthermore, $K_0$ has $m$ eigenvalues with strictly negative real
parts, and it is the unique such solution; $ K^*_0$ has one
eigenvalue equal to zero and $m-1$ eigenvalues with strictly negative
real parts, and is the unique such solution.
\end{lem}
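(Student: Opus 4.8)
The plan is to reduce everything to Lemma~\ref{lem:expansions} plus elementary algebra. First I would establish the expansions \eqref{eq:kepsilon} and \eqref{eq:kstepsilon}. Starting from the definition \eqref{eq:kepsi}, write $K_\epsi = (C_\epsi^{+})^{-1}T_\epsi^{++} + \Psi_\epsi |C_\epsi^{-}|^{-1}T_\epsi^{-+}$ and substitute the explicit forms $C_\epsi^{+} = D + (1/\epsi)\Theta$, $|C_\epsi^{-}| = (1/\epsi)\Theta - D$, $T_\epsi^{++} = Q - (1/\epsi)^2 I$, and $T_\epsi^{-+} = (1/\epsi)^2 I$. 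Factoring out $(1/\epsi)\Theta$ gives $(C_\epsi^{+})^{-1} = \epsi\Theta^{-1}(I + \epsi\Theta^{-1}D)^{-1} = \epsi\Theta^{-1} - \epsi^2\Theta^{-1}D\Theta^{-1} + O(\epsi^3)$, and similarly $|C_\epsi^{-}|^{-1} = \epsi\Theta^{-1} + \epsi^2\Theta^{-1}D\Theta^{-1} + O(\epsi^3)$. Multiplying these by $T_\epsi^{++}$ and $T_\epsi^{-+} = (1/\epsi)^2 I$ respectively, and inserting $\Psi_\epsi = I + \epsi\Psi_1 + O(\epsi^2)$ from \eqref{eqn:Psieps}, the $(1/\epsi)$ and $\epsi^{-0}$-divergent terms must be checked to cancel; the surviving $\epsi^0$ term is $-\Theta^{-1}D\Theta^{-1} \cdot \Theta\Theta^{-1}\cdots$ — more precisely, collecting coefficients one finds $K_0 = -\Theta^{-1}D\Theta^{-1}\Theta + \Psi_1\Theta^{-1} + \Theta^{-1}D\Theta^{-1}\Theta = \Psi_1\Theta^{-1} + 2V^{-1}D$ after using $\Theta^{-1}\Theta^{-1} = V^{-1}$. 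The same computation with $\Psi_\epsi^*$, $C_\lambda^{+}$, $T_\lambda^{+-}=(1/\epsi)^2 I$, $T_\lambda^{--}=Q-(1/\epsi)^2 I$ in \eqref{eq:kepsist} yields $K^*_0 = \Psi^*_1\Theta^{-1} - 2V^{-1}D$, with the sign of the $D$-term flipped because the roles of $C^{+}$ and $C^{-}$ are interchanged.

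Next I would verify that $-K_0$ and $K^*_0$ solve \eqref{eqn:forK0}. By Lemma~\ref{lem:expansions}, $X = \Theta^{-1}\Psi_1$ solves $X^2 + 2V^{-1}DX + 2V^{-1}Q = 0$. Conjugating this equation by $\Theta$ on the left and $\Theta^{-1}$ on the right: set $Y = \Theta X\Theta^{-1} = \Psi_1\Theta^{-1}$, so $\Theta X^2\Theta^{-1} = Y^2$, $\Theta(2V^{-1}DX)\Theta^{-1} = 2\Theta V^{-1}D\Theta X\Theta^{-1} = 2\Theta V^{-1}D\cdot Y$, and $\Theta(2V^{-1}Q)\Theta^{-1} = 2\Theta V^{-1}Q\Theta^{-1}$. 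Using $\Theta V^{-1} = \Theta^{-1}$ (since $V = \Theta^2$), this becomes $Y^2 + 2\Theta^{-1}DY + 2\Theta^{-1}Q\Theta^{-1} = 0$, i.e. $(\Psi_1\Theta^{-1})^2 + 2\Theta^{-1}D(\Psi_1\Theta^{-1}) + 2\Theta^{-1}Q\Theta^{-1} = 0$. Now substitute $\Psi_1\Theta^{-1} = K_0 - 2V^{-1}D$ and expand; since $\Theta^{-1}D = V^{-1}D\Theta$ is not quite what appears, I need to be careful, but the point is that after substituting and using $\Theta^{-1}D\cdot 2V^{-1}D$-type cross terms, the equation rearranges into $(-K_0)^2 + 2(-K_0)V^{-1}D + 2\Theta^{-1}Q\Theta^{-1} = 0$. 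The computation for $K^*_0$ is identical with $\Psi_1^*$ in place of $\Psi_1$ and the sign of $D$ reversed, which is exactly compensated by the second equation in Lemma~\ref{lem:expansions} stating that $-\Theta^{-1}\Psi_1^*$ (not $+\Theta^{-1}\Psi_1^*$) solves \eqref{eqn:theequation}.

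For the spectral and uniqueness statements, I would relate the eigenvalues of $K_0$ to the roots $\theta_j$. Observe that the two matrix-quadratic equations \eqref{eqn:theequation} and \eqref{eqn:forK0} are conjugate: if $X$ solves \eqref{eqn:theequation} then $\Theta X\Theta^{-1}$ solves \eqref{eqn:forK0} (this is the computation of the previous paragraph run with $Q$ general), hence $\gamma(z) = \det(z^2 I + 2zV^{-1}D + 2V^{-1}Q)$ equals, up to the nonzero constant $\det(\Theta)\det(\Theta^{-1}) = 1$, the characteristic polynomial $\det(z^2 I + 2zV^{-1}D + 2\Theta^{-1}Q\Theta^{-1})$ associated to \eqref{eqn:forK0}. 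So the two equations have the same set of latent roots $\theta_1,\dots,\theta_{2m}$, satisfying \eqref{e:splitting}. Since $-\Theta^{-1}\Psi_1 = -X$ has one zero eigenvalue and $m-1$ with strictly positive real parts (negating the statement in Lemma~\ref{lem:expansions} that $\Theta^{-1}\Psi_1$ has a zero eigenvalue and $m-1$ negative ones), conjugation preserves eigenvalues, so $-\Psi_1\Theta^{-1}$ likewise; but $K_0 = \Psi_1\Theta^{-1} + 2V^{-1}D$ is \emph{not} a conjugate of $\Theta^{-1}\Psi_1$. Instead I argue directly: the eigenvalues of a solution $X$ of \eqref{eqn:forK0} are among the $\theta_j$, and a selection of $m$ of them forms the spectrum of $X$ precisely when it is closed under nothing extra — one uses the standard fact (as in the fluid-queue literature) that $-K_0$, being built from the "downward" first-passage data, picks up the $m$ roots with $\mathrm{Re}(\theta)\geq 0$ including $\theta_m = 0$, so $-K_0$ has one zero eigenvalue and $m-1$ with strictly positive real parts, i.e. $K_0$ has $m$ eigenvalues with strictly negative real parts; symmetrically $K^*_0$, coming from "upward" data, inherits the $m$ roots with $\mathrm{Re}(\theta)\leq 0$, which are $\theta_1,\dots,\theta_{m-1},\theta_m=0$, giving one zero eigenvalue and $m-1$ with strictly negative real parts. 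Uniqueness of each as a solution of \eqref{eqn:forK0} with the prescribed spectrum follows from the uniqueness assertions in Lemma~\ref{lem:expansions} transported through the conjugation $X \mapsto \Theta X\Theta^{-1}$ together with the affine shift; irreducibility is likewise inherited, since conjugation by the diagonal positive matrix $\Theta$ and addition of the diagonal matrix $2V^{-1}D$ preserve the zero/non-zero pattern off the diagonal. I expect the main obstacle to be the bookkeeping in the first paragraph — confirming that all the $O(1/\epsi)$ and $O(1/\epsi^2)$-order terms in $(C_\epsi^{+})^{-1}T_\epsi^{++} + \Psi_\epsi|C_\epsi^{-}|^{-1}T_\epsi^{-+}$ cancel and leave a finite limit — since this is where the precise first-order content of $\Psi_\epsi = I + \epsi\Psi_1 + O(\epsi^2)$ is actually consumed, and a sign error there would propagate everywhere.
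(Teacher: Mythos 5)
Your first two steps are essentially the paper's own argument and are sound: the $\epsi$-expansion of $K_\epsi$ and $K^*_\epsi$ obtained by expanding $(C_\epsi^{+})^{-1}$, $|C_\epsi^{-}|^{-1}$ and inserting $\Psi_\epsi=I+\epsi\Psi_1+O(\epsi^2)$ does make the $1/\epsi$ terms cancel and leaves $K_0=\Psi_1\Theta^{-1}+2V^{-1}D$ (your intermediate bookkeeping is garbled but the outcome is right), and the verification that $-K_0$ and $K^*_0$ satisfy \eqref{eqn:forK0} by conjugating \eqref{eqn:theequation} with $\Theta$ and then substituting $\Psi_1\Theta^{-1}=K_0-2V^{-1}D$ is a valid route. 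One small slip there: the conjugated equation is $Y^2+2V^{-1}DY+2\Theta^{-1}Q\Theta^{-1}=0$ with $Y=\Psi_1\Theta^{-1}$, not $Y^2+2\Theta^{-1}DY+\cdots$, because $\Theta\,V^{-1}D\,\Theta^{-1}=V^{-1}D$ (and $\Theta^{-1}D\neq V^{-1}D$); this is harmless since the final rearrangement you state is correct.

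The spectral step, however, contains a genuine gap. You assign the eigenvalues of $-K_0$ by a vague appeal to ``standard facts'' about downward first-passage data and conclude that $-K_0$ has one zero eigenvalue and $m-1$ with strictly positive real part. This is miscounted (there are $m+1$ roots with $\re(\theta)\ge 0$, not $m$) and self-contradictory: it would give $K_0$ a zero eigenvalue while in the same sentence you claim $K_0$ has $m$ eigenvalues with strictly negative real parts (and $K_0$ must indeed be nonsingular, e.g.\ for \eqref{eqn:xeta1c2} to make sense). The argument that actually works, and is the one in the paper, uses the factorization you already have implicitly: from $\Psi_1\Theta^{-1}=K_0-2V^{-1}D$ and the conjugated equation, $\Xi(z)=z^2I+2zV^{-1}D+2\Theta^{-1}Q\Theta^{-1}=(zI+K_0)(zI-\Psi_1\Theta^{-1})$, which is \eqref{e:whgamma} conjugated by $\Theta$. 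Taking determinants, the $2m$ roots of $\det\Xi$ (the same $\theta_1,\dots,\theta_{2m}$, as you note) split as $\sp(-K_0)\cup\sp(\Psi_1\Theta^{-1})$, and since $\sp(\Psi_1\Theta^{-1})=\sp(\Theta^{-1}\Psi_1)=\{\theta_1,\dots,\theta_m\}$ by Lemma~\ref{lem:expansions}, necessarily $\sp(-K_0)=\{\theta_{m+1},\dots,\theta_{2m}\}$, all with strictly positive real part: the zero root goes with $\Theta^{-1}\Psi_1$ and with $K^*_0$ (your assignment for $K^*_0$ is the correct one), never with $K_0$. Relatedly, your uniqueness argument does not go through as stated: the affine shift $X\mapsto X+2V^{-1}D$ does not commute with $\Psi_1\Theta^{-1}$, so spectra are not preserved and the uniqueness assertions of Lemma~\ref{lem:expansions} cannot simply be ``transported through the conjugation and the shift''; uniqueness of the solution of \eqref{eqn:forK0} with the prescribed spectrum has to be argued from the splitting \eqref{e:splitting} of the roots of $\det\Xi$ itself, in parallel with the argument for \eqref{eqn:theequation}.
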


\begin{proof}
We write (\ref{eq:kepsi}) as 
\begin{align*}
\varepsilon K_{\varepsilon} & = -(\Theta + \varepsilon D)^{-1}(I - \varepsilon^2 Q) + \Psi_{\varepsilon}(\Theta - \varepsilon D)^{-1}\nonumber \\ 
& = -(\Theta^{-1} - \varepsilon V^{-1} D + O(\varepsilon^2))(I - \varepsilon^2 Q) \nonumber \\
& \;\;\;\; + (I + \varepsilon \Psi_1 + O(\varepsilon^2))(\Theta^{-1} + \varepsilon V^{-1} D + O(\varepsilon^2))  \nonumber \\
\intertext{by \eqref{eqn:expan1}, \eqref{eqn:expan2} and \eqref{eqn:Psieps},} 
& = \varepsilon (2 V^{-1} D + \Psi_1 \Theta^{-1}) + O(\varepsilon^2), 
\end{align*} 
which proves (\ref{eq:kepsilon});  equation (\ref{eq:kstepsilon})
follows in a similar manner. It is easy to verify that $K_0$ and  $-K_0^* $ both satisfy
(\ref{eqn:forK0}),
of which the associated polynomial is
\begin{align*}
\Xi(z) &= z^2 I + 2 z V^{-1} D + 2 \Theta^{-1} Q \Theta^{-1} \\
 & = \Theta^{-1} \Gamma(z) \Theta^{-1}  \qquad \mbox{with $\Gamma(z)$
   defined in Proposition \ref{t:factorization}} \\
 & = (zI +K_0) \Theta (z I - \Theta^{-1} \Psi_1)\Theta^{-1}
\end{align*}
by (\ref{e:whgamma}), after some simple manipulations.
This, together with Lemma~\ref{lem:expansions}, shows that the eigenvalues of $-K_0$ are the roots
$\theta_{m+1}$, \ldots, $\theta_{2m}$ of $\gamma(z)$ with strictly
positive real parts.  Finally, using (\ref{e:whgammab}) we write 
\begin{align*}
\Xi(z) = (zI - K_0^*) \Theta (zI + \Theta^{-1} \Psi_1^*) \Theta^{-1},
\end{align*} 
and conclude that the eigenvalues of $K_0^*$ are the roots $\theta_1$
to $\theta_m$.  Finally, as $\Psi_1$ and $\Psi_1^*$ are irreducible,
and $\Theta$, $V$, and $D$ are diagonal matrices, we conclude that
$K_0$ and $K_0^*$ are irreducible, and this completes the proof.
\end{proof}

The next theorem states that the limit, as $\lambda \rightarrow
\infty$, of the stationary distributions of the approximating fluid
processes is indeed the stationary distribution of the limiting
process $\{Y(t), \kappa(t)\}$. We prove this result by showing that
the limiting distribution 
(\ref{eqn:theend})
coincides with the stationary distribution of $\{Y(t), \kappa(t)\}$ as obtained by Asmussen~\cite[Theorem~2.1 and Corollary~4.1]{asmussen95}.

\begin{thm} 
	\label{prop:limpi} 
	The limiting distribution of
  $\{\widehat L_\lambda(t), \widehat\varphi_\lambda(t)\}$ converges,
  as $\lambda$ goes to infinity, to the stationary distribution of
  $\{Y(t),\kappa(t)\}$, and is given by
\begin{align}  \label{eqn:theend}
\lim_{\varepsilon \rightarrow 0} {\bs{\pi}}_{\varepsilon}(x) (\bs 1 \otimes I) & = 2 \bs{\zeta}_1e^{K_0 x} \Theta^{-1}, \\
   \label{eqn:theendb}
\lim_{\varepsilon \rightarrow 0} \bs{F}_{\varepsilon}(0)(\bs{1} \otimes I) & = \bs{0},
\end{align}
where 
\begin{align}
& \bs{\zeta}_1\Psi_1  = \bs{0}, \label{eqn:xeta1c1} \\
& 2\bs{\zeta}_1(-K_0)^{-1}\Theta^{-1} \bs{1}  = 1. \label{eqn:xeta1c2}
\end{align} 
\end{thm}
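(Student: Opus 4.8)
The plan is to pass to the limit directly in the explicit formulas \eqref{eqn:smass}--\eqref{eqn:zetanorm}, using the expansions of $\Psi_\epsi$ and $K_\epsi$ from Lemmas~\ref{lem:expansions} and~\ref{th:k}, and then check the answer against Asmussen's formula. First I would treat \eqref{eqn:theendb}: from \eqref{eqn:smass} we have $\bs F_\epsi(0)(\bs 1 \otimes I) = \vligne{\bs 0 & \bs\zeta_\epsi}(\bs 1 \otimes I) = \bs\zeta_\epsi$, so the content of \eqref{eqn:theendb} is the assertion $\bs\zeta_\epsi \to \bs 0$. This is the delicate normalization point: the mass at level~$0$ must vanish in the limit (the reflected MMBM has no atom at~$0$ when all $\sigma_i^2>0$). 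I expect this to be the main obstacle, and I would establish it as a consequence of the normalization \eqref{eqn:zetanorm}: write $\bs\zeta_\epsi = c_\epsi \bs u_\epsi$ with $\bs u_\epsi$ a probability-normalized left null vector of $T_\epsi^{--}+T_\epsi^{-+}\Psi_\epsi$, show $\bs u_\epsi \to \bs\alpha$ (using $\Psi_\epsi \to I$ and $T_\epsi^{--}+T_\epsi^{-+} = Q$, so the limiting equation is $\bs u\, Q = \bs 0$), and then read off from \eqref{eqn:zetanorm} that the second term, which carries a factor $\bs\zeta_\epsi T_\epsi^{-+}(-K_\epsi)^{-1}$, behaves like $c_\epsi \cdot \epsi^{-2}\cdot \epsi \cdot (\text{bounded}) = c_\epsi/\epsi$ to leading order, forcing $c_\epsi = O(\epsi)$ and hence $\bs\zeta_\epsi \to \bs 0$. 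Simultaneously this identifies the rescaled limit $\bs\zeta_1 := \lim_{\epsi\to 0}\bs\zeta_\epsi/\epsi$, which is what appears in \eqref{eqn:theend}.

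Next I would derive the limiting equations \eqref{eqn:xeta1c1}--\eqref{eqn:xeta1c2} for $\bs\zeta_1$. Substituting $\Psi_\epsi = I + \epsi\Psi_1 + O(\epsi^2)$ into \eqref{eqn:zeta} gives $\bs\zeta_\epsi(Q + \epsi^{-2}\epsi\Psi_1 + O(1)\cdot\text{stuff}) = \bs 0$; collecting the dominant term after dividing by the appropriate power of $\epsi$ and using $\bs\zeta_\epsi = \epsi\bs\zeta_1 + o(\epsi)$ yields $\bs\zeta_1\Psi_1 = \bs 0$, which is \eqref{eqn:xeta1c1}. (Here I must be careful with the orders: $T_\epsi^{-+}=\epsi^{-2}I$ multiplies $\Psi_\epsi - I = O(\epsi)$, giving an $O(\epsi^{-1})$ contribution, which dominates the $O(1)$ term $Q$ from $T_\epsi^{--}+T_\epsi^{-+}=Q$; after scaling this isolates $\Psi_1$.) Then \eqref{eqn:xeta1c2} follows from the leading-order behaviour of \eqref{eqn:zetanorm} computed in the previous paragraph: the $\bs\zeta_\epsi\bs 1$ term is $O(\epsi)$ and negligible against the $O(1)$ second term, whose limit is $2\bs\zeta_1(-K_0)^{-1}\Theta^{-1}\bs 1$, using $T_\epsi^{-+}(-K_\epsi)^{-1} = \epsi^{-2}(-K_0 + O(\epsi))^{-1}$, the factor $\epsi$ from $\bs\zeta_\epsi$, another factor pattern from $(C_\epsi^{+})^{-1} = \epsi\Theta^{-1} + O(\epsi^2)$ and $\Psi_\epsi|C_\epsi^{-}|^{-1} = \epsi\Theta^{-1}+O(\epsi^2)$ (these two $\epsi\Theta^{-1}$ terms add to give the factor $2\Theta^{-1}$), with all the $\epsi$ powers cancelling to leave an $O(1)$ limit equal to~$1$.

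For the density \eqref{eqn:theend}, I would start from \eqref{eqn:sdensity}, compute $\bs\pi_\epsi(x)(\bs 1\otimes I) = \bs\zeta_\epsi T_\epsi^{-+} e^{K_\epsi x}\{(C_\epsi^{+})^{-1}\bs 1 \text{-part} + \Psi_\epsi|C_\epsi^{-}|^{-1}\text{-part}\}(\bs 1)$; more precisely $(\bs 1\otimes I)$ sums the two column-blocks, giving $\bs\zeta_\epsi T_\epsi^{-+} e^{K_\epsi x}\left((C_\epsi^{+})^{-1} + \Psi_\epsi|C_\epsi^{-}|^{-1}\right)$. Now substitute: $\bs\zeta_\epsi = \epsi\bs\zeta_1 + o(\epsi)$, $T_\epsi^{-+} = \epsi^{-2}I$, $e^{K_\epsi x} \to e^{K_0 x}$ by continuity of the exponential and Lemma~\ref{th:k}, and $(C_\epsi^{+})^{-1} + \Psi_\epsi|C_\epsi^{-}|^{-1} = (\epsi\Theta^{-1}+O(\epsi^2)) + (\epsi\Theta^{-1}+O(\epsi^2)) = 2\epsi\Theta^{-1} + O(\epsi^2)$. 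Multiplying the $\epsi$-powers $\epsi \cdot \epsi^{-2}\cdot \epsi = 1$ gives the clean limit $2\bs\zeta_1 e^{K_0 x}\Theta^{-1}$, which is \eqref{eqn:theend}. Finally I would close the argument by verifying that $2\bs\zeta_1 e^{K_0 x}\Theta^{-1}$, with $\bs\zeta_1$ characterized by \eqref{eqn:xeta1c1}--\eqref{eqn:xeta1c2}, coincides with the stationary density of $\{\widehat Y(t),\kappa(t)\}$ in Asmussen~\cite[Theorem~2.1 and Corollary~4.1]{asmussen95}: the exponent matrix $K_0$ matches Asmussen's matrix (both are the unique solution of \eqref{eqn:forK0} with all eigenvalues in the open left half-plane, by Lemma~\ref{th:k}), the factor $\Theta^{-1}$ reconciles the different normalizations, and the two scalar conditions \eqref{eqn:xeta1c1}--\eqref{eqn:xeta1c2} match Asmussen's determination of the boundary vector (the first encoding that the boundary vector lies in the correct invariant subspace — equivalently the absence of an atom at~$0$ — and the second being the total-mass normalization $\int_0^\infty \bs\pi(x)\bs 1\,\ud x = 1$). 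The main obstacle, again, is the careful bookkeeping of powers of $\epsi$ in the normalization \eqref{eqn:zetanorm} to get $\bs\zeta_\epsi = O(\epsi)$ with the correct constant; everything else is continuity of the matrix exponential plus the substitution of the expansions already proved.
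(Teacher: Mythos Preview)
Your proposal is essentially the paper's own argument: expand $\bs\zeta_\epsi$, use the normalization \eqref{eqn:zetanorm} with the $\epsi$-power bookkeeping to force $\bs\zeta_\epsi = O(\epsi)$, extract \eqref{eqn:xeta1c1}--\eqref{eqn:xeta1c2} from the leading orders of \eqref{eqn:zeta} and \eqref{eqn:zetanorm}, pass to the limit in \eqref{eqn:sdensity}, and match against Asmussen. The $\epsi$-power counting and the verification step are exactly what the paper does.

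There is one slip in your first paragraph: you claim $\bs u_\epsi \to \bs\alpha$ because ``$\Psi_\epsi \to I$ and $T_\epsi^{--}+T_\epsi^{-+}=Q$, so the limiting equation is $\bs u\,Q=\bs 0$''. That reasoning is wrong, and you yourself identify why two paragraphs later: $T_\epsi^{-+}=\epsi^{-2}I$ multiplies $\Psi_\epsi-I=\epsi\Psi_1+O(\epsi^2)$, so $T_\epsi^{--}+T_\epsi^{-+}\Psi_\epsi = Q + \epsi^{-1}\Psi_1 + O(1)$, and after rescaling by $\epsi$ the limiting null-space equation is $\bs u\,\Psi_1=\bs 0$, not $\bs u\,Q=\bs 0$. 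This does not damage the conclusion you need (namely $c_\epsi=O(\epsi)$), because all that is required is that $\bs u_\epsi$ remains a probability vector and $(-K_0)^{-1}\Theta^{-1}\bs 1$ has strictly positive entries; but the stated justification should be corrected. The paper sidesteps this by invoking Kato to write $\bs\zeta_\epsi=\bs\zeta_0+\epsi\bs\zeta_1+o(\epsi)$ directly and then killing $\bs\zeta_0$ with the same positivity argument ($-K_0^{-1}>0$, $\Theta^{-1}>0$, $\bs\zeta_0\geq\bs 0$) applied to the $1/\epsi$ coefficient of \eqref{eqn:zetanorm}.
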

\begin{proof} The solution of (\ref{eqn:zeta}) is of the form $\bs{\zeta}_{\varepsilon} = \bs{\zeta}_0 + \varepsilon \bs{\zeta}_1 + o(\varepsilon)$ (\cite[Theorem~5.4]{kato66}) and \eqref{eqn:zetanorm} becomes
\begin{align}
& \{\bs{\zeta}_0 + \varepsilon\bs{\zeta}_1 + o(\varepsilon)\}\bs{1} \nonumber \\
& + \{\bs{\zeta}_0 + \varepsilon\bs{\zeta}_1 + o(\varepsilon)\}(1/\varepsilon)(-K_{\varepsilon})^{-1}\{(\varepsilon D + \Theta)^{-1} + \Psi_{\varepsilon}(\Theta - \varepsilon D)^{-1}\}\bs{1} = 1.  \label{eq:normalzeta}
\end{align} 
We equate the coefficients of $1/\epsi$ on both sides of
(\ref{eq:normalzeta}), using (\ref{eqn:Psieps}, \ref{eq:kepsilon}) and
we find that 
\begin{align*} 
-2 \bs\zeta_0 K_0^{-1} \Theta^{-1} \bs 1= 0. 
\end{align*} 
Equation \eqref{eqn:smass} implies that $\bs{\zeta}_{\varepsilon} \geq
\bs{0}$ and, by continuity, $\bs{\zeta}_{0} \geq \bs{0}$. Furthermore,
$\Theta^{-1}$ is a diagonal matrix with strictly positive diagonal.
Finally, $K_0$ is irreducible and has eigenvalues with strictly
negative real part by Lemma~\ref{th:k}, so that
$\int_0^{\infty}e^{K_0 u} \ud u$ converges to $-K_0^{-1}$ and is
strictly positive.
This implies that $\bs{\zeta}_0 = \bs{0}$, which
proves~\eqref{eqn:theendb}. Using $\bs{\zeta}_0 = \bs{0}$, and equating
coefficients of $\varepsilon$ on both sides of \eqref{eqn:zeta} gives
\eqref{eqn:xeta1c1}, while  equating the coefficients of $\epsi^0$ on
both sides of (\ref{eq:normalzeta}) leads to (\ref{eqn:xeta1c2}).

Gathering everything together, we obtain from (\ref{eqn:sdensity})
\begin{align} 
\bs{\pi}_{\varepsilon}(x) & = (1/\varepsilon)\{\varepsilon \bs{\zeta}_1 + o(\varepsilon)\}e^{K_{\varepsilon}x}\vligne{(\varepsilon D + \Theta)^{-1} & \Psi_{\varepsilon}(\Theta - \varepsilon D)^{-1}},
\end{align} 
from which (\ref{eqn:theend}) follows.  Equation (\ref{eqn:theendb}) is a direct consequence of (\ref{eqn:smass}).

To verify that the limiting distribution in Theorem~\ref{prop:limpi} is the stationary density vector~$\bs{g}(x)$ of the Markov-modulated Brownian motion, we use  Asmussen~\cite{asmussen95}.  By \cite[Theorem~2.1 and Corollary~4.1]{asmussen95}, 
\begin{align} 
\bs{g}(x) & = [e^{\Lambda x}(-\Lambda \bs{1})]^{\mbox{\tiny T}}\Delta_{\bs{\alpha}}, \label{eqn:Asmusseng1}
\end{align}
where $\bs{\alpha}$ is the stationary distribution vector of $Q$, $\Delta_{\bs{\alpha}} = \diag(\bs{\alpha})$, and $\Lambda$ is a defective generator matrix satisfying 
\begin{align}
(1/2)V\Lambda^2 - D\Lambda + \Delta_{1/\bs{\alpha}} Q^{\mbox{\tiny T}} \Delta_{\bs{\alpha}} = 0. \label{eqn:Lambda}
\end{align} 
Define $Z = \Delta_{1/{\bs{\alpha}}} \Lambda^{\mbox{\tiny T}} \Delta_{\bs{\alpha}}$ and rewrite \eqref{eqn:Asmusseng1} as 
\begin{align}
\bs{g}(x) & = -\bs{1}^{\mbox{\tiny T}} \Delta_{\bs{\alpha}}Z\Delta_{1/\bs{\alpha}} e^{\Delta_{\bs{\alpha}}Z\Delta_{1/\bs{\alpha}}x}\Delta_{\bs{\alpha}}  = -\bs{\alpha} Z e^{Zx}. \label{eqn:Asmusseng2}
\end{align} 
By (\ref{eqn:Lambda}), we find that 
\[
(1/2) Z^2 V - Z D + Q =0
\]
and that $\Theta^{-1} Z \Theta$ is a solution of \eqref{eqn:forK0}.
It is similar to $Z$ and so to $\Lambda^{\mbox{\tiny T}} $, therefore,
the  eigenvalues of $\Theta^{-1} Z \Theta$ 
all have strictly negative real parts and we have
\begin{align} \label{eqn:ZK} 
K_0 = \Theta^{-1} Z \Theta.
\end{align} 
Substituting \eqref{eqn:ZK} into \eqref{eqn:theend} gives $\lim_{\varepsilon \rightarrow 0} \bs{\pi}_{\varepsilon}(\bs{1} \otimes I) = 2\bs{\zeta}_1\Theta^{-1}e^{Zx}.$ Finally, it is straightforward to verify that $\bs{\zeta}_1 = -\bs{\alpha}(\Theta^{-1}D + (1/2)\Theta\Psi_1 \Theta^{-1})$, and consequently 
\begin{align}
\lim_{\varepsilon \rightarrow 0} \bs{\pi}_{\varepsilon}(\bs{1} \otimes I) = \bs{g}(x).
\end{align} 
\end{proof}

\begin{rem} \em An alternative way to show that the stationary
  distribution of the approximating fluid process
  $\{\widehat{L}_{\lambda}(t), \widehat{\varphi}_{\lambda}(t)\}$
  converges, as $\lambda \rightarrow \infty$, to the stationary
  distribution of the limiting Markov-modulated Brownian motion
  $\{Y(t), \kappa(t)\}$ is via the maximum representation of the
  relevant processes.  Asmussen~\cite{asmussen95} derives the
  stationary distribution, both for fluid queues and for the MMBM in
  this manner, linking these to the distribution of the maximum of the
  time-reversed process.  Following the arguments in Enikeeva {\em et
    al.} \cite{ekr01} and in Stenflo \cite{stenf01}, one might show
  that there is continuity of the maximum distributions of the
  backward processes, as $\lambda \rightarrow \infty$, and
  consequently obtain the continuity of stationary distributions.

  This would lead to a time reversal-based proof of convergence.  As
  stated in the introduction, we aim at following the forward-time
  approach and so obtain a different representation of the stationary
  distribution.  In addition, we obtain limiting properties for key
  matrices, and these results will be proved useful in future work.
\end{rem}

\section{Computational procedure} 
	\label{sec:comproc}
        
Theorem~\ref{prop:limpi} indicates that the matrix $\Psi_1$ is the
central ingredient in evaluating the stationary distribution of the
Markov-modulated Brownian motion $\{Y(t), \varphi(t)\}$. We describe
here how to use the splitting property (\ref{e:splitting}) in
numerically solving for $\Psi_1$ and $\Psi_1^*$.   

Bini and Gemignani~\cite{bg05} consider
 quadratic matrix equations $C + AX + BX^2 = 0$  where the roots of
the associated polynomial $\det(C + zA + z^2B)$ are split by a circle
in $\C$, half being inside the disk and half outside.  The problem
in~\cite{bg05} 
is to find the minimal solution, that is, the solution matrix with all
eigenvalues inside the disk.

In our case, the roots are split between the negative and the positive
half-planes and we need to apply some transformation, such as the one
described in Bini \emph{et al.}~\cite{blm01b} and based on the inverse
M\"{o}bius mapping~\cite[Chapter~2.1]{apostol97}
\begin{align}
w(z) = \frac{z - 1}{z + 1}.  
\end{align} 
This inverse mapping applies the open unit disk $|z|<1$ onto the
negative half-plane $\C_\d$, the unit circle $|z|=1$ minus the point
$z=-1$ onto the imaginary axis $\C_0$, the outside $|z|>1$ of the
closed unit disk onto the positive half-plane $\C_\u$, and the
imaginary axis $\C_0$ onto the unit circle $|w|=1$ minus the point
$w=1$.

Now, define $W(Z) = (Z - I)(Z + I)^{-1}$.  Instead of solving
$P(X) = X^2 + 2V^{-1}D X + 2V^{-1}Q = 0$ for $\Theta^{-1}\Psi_1$, we
solve $H(Z) = 0$, where
\begin{align*} 
H(Z) & = P(W(Z))(I + Z)^2  \\
        & = P((Z - I)(Z + I)^{-1})(I + Z)^2   \\
        & = (I + 2V^{-1}D + 2V^{-1}Q)Z^2 - 2(I - 2V^{-1}Q)Z + I \\
         & \quad -         2V^{-1}D + 2V^{-1}Q.   
\end{align*} 
The roots of $\det(H(z))$ are given by $\omega_i = w^{-1}(\tau_i) = (1
+ \tau_i)/(1 - \tau_i)$ for $i = 1, \ldots, 2m$, and satisfy the
splitting property
\begin{align} \label{eqn:omegai}
0 \leq  |\omega_1| , \ldots , |\omega_{m - 1}| < \omega_m = 1 < |\omega_{m + 1}| , \ldots , |\omega_{2m}|.
\end{align} 
We note that Bini and Geminiani~\cite{bg05} requires $|\omega_i| >0$ for all $i$, but as seen in \cite[Section 8.3]{blm05}, the weak inequality suffices.

Define $Z_0$ as the solution of $H(Z)=0$ such that $\sp(Z_0) \leq 1$.
The matrix $W(Z_0)=(Z_0-I)(Z_0+I)^{-1}$ is a solution of
$P(X)=0$ with all eigenvalues in $\{\re(z) \leq 0\}$, and so
$\Theta^{-1}\Psi_1 = (Z_0-I)(Z_0+I)^{-1}$.  

Several iterative algorithms to compute $Z_0$ are discussed in
\cite{bg05}.  Some have superlinear convergence, such as Cyclic
reduction~\cite{bgm01}, Logarithmic reduction~\cite[Chapter 8]{lr99},
subspace iteration \cite{as97} or Graeffe iteration \cite{bgm01}.
This means that the approximation error at the $i$th iteration
is $O(\sigma^{2^i})$ with $\sigma = 1/ |\omega_{m+1}| < 1$.
These algorithms are globally convergent but they are not
self-correcting.  Since the coefficient 
matrices in $P(X)$ are of mixed signs, one might prefer the algorithm
developed in \cite{bg05}: it is self-correcting and the approximation
error is $O(\sigma^{i 2^k})$ for arbitrary $k$, which makes it
arbitrarily fast.

As for $\Psi_1^*$, if we define $Z_1$ as the root of $H(Z)$ such that
all of its eigenvalues are outside the closed unit disk, then $W(Z_1)$
has all its eigenvalues in the half-plane $\C_\u$, and
$\Theta^{-1}\Psist_1 = -W(Z_1)$. The algorithms in \cite{bg05},
however, do not seem to be well adapted to the computation of $Z_1$,
and we suggest to use a different transformation, in order to bring
the eigenvalues of $-\Theta^{-1}\Psist_1$ inside the unit disk. This
transformation is based on M\"{o}bius'
mapping~\cite[Chapter~2.1]{apostol97}
\begin{align} 
z(w) = \frac{1 + w}{1 - w}.
\end{align} 
This mapping applies the open unit disk $|w|<1$ onto the positive half-plane $\C_\u$, the unit circle $|w|=1$ minus the point $w=1$ onto the imaginary axis $\C_0 = \{z: \re(z)=0\}$ and the outside $|w|>1$ of the closed unit disk onto the negative half-plane $\C_\d$, finally, it applies the imaginary axis $\C_0$ onto the unit circle $|z|=1$ minus the point $z=-1$.
 
Now, define $Z(W) = (I + W)(I - W)^{-1}$. Instead of solving $H(Z) =
0$ we solve $Q(W) = 0$ for the matrix solution $W_1$ with eigenvalues
inside the unit disk, where
%
%
\begin{align*}
Q(W) & = P(Z(W))(I - W)^2, \\
         & = (I - 2V^{-1}D + 2V^{-1}Q)W^2  + 2(I - 2V^{-1}Q)W + I + 2V^{-1}D + 2V^{-1}Q,
\end{align*} 
and so $\Theta^{-1} \Psi_1^* = -(I+W_1)(I-W_1)^{-1}$.
	

\appendix

\section{Proof of Lemma \ref{lem:expansions}}
The proof goes in four main steps.  The matrix $\Psi_\epsi$ is the
stochastic (or substochastic) solution of the Riccati equation 
\[
(C_{\varepsilon}^{+})^{-1}  T^{+-}_{\varepsilon} 
+ (C_{\varepsilon}^{+})^{-1} T^{++}_{\varepsilon} \Psi_\epsi
+ \Psi_\epsi |C_{\varepsilon}^{-}|^{-1} T^{--}_{\varepsilon} 
+ \Psi_\epsi |C_{\varepsilon}^{-}|^{-1}
T^{-+}_{\varepsilon}\Psi_{\varepsilon}
= 0
\]
(Rogers~\cite{roger94}), equation that we write as
\begin{align}
& (1/\varepsilon)(\varepsilon D + \Theta)^{-1} + (1/\varepsilon)(\varepsilon D + \Theta)^{-1}(\varepsilon^2 Q - I) \Psi_{\varepsilon} \nonumber \\
& \hspace*{1cm} + ({1}/{\varepsilon}) \Psi_{\varepsilon} |\varepsilon D - \Theta|^{-1} (\varepsilon^2 Q - I) + ({1}/{\varepsilon})\Psi_{\varepsilon} |\varepsilon D - \Theta|^{-1} \Psi_{\varepsilon} = 0 \label{eqn:coffee}
\end{align} 
Thus, $\Psi_\epsi$ is a solution of $\FF_\epsi(X) =0$, where
\begin{align*}
\FF_\epsi(X) = & (\varepsilon D + \Theta)^{-1} + (\varepsilon D + \Theta)^{-1}(\varepsilon^2 Q - I) X  \\
& + X |\varepsilon D - \Theta|^{-1} (\varepsilon^2 Q - I) + X |\varepsilon D - \Theta|^{-1} X.
\end{align*} 
For $\epsi = 0$, we see that $\FF_0(I)=0$.  It  is tempting to invoke
the Implicit Function Theorem and claim that $\Psi_\epsi$ is an
analytic function of $\epsi$ in a neighborhood of $\epsi=0$.
Unfortunately, the operator $\partial/\partial X \FF_\epsi(X)$ is
singular at the point $(\epsi=0, X=I)$, the Implicit Function Theorem
does not apply, and we follow a longer,
more tortuous path. 

\begin{prop} \label{prop:updown} For $\varepsilon \geq 0$, 
\begin{align}
\Psi_{\varepsilon} & = I + \Phi_{\varepsilon}  \quad \mbox{ where } \lim_{\varepsilon \rightarrow 0} \Phi_{\varepsilon} = 0,  \label{eqn:up} \\
\Psi_{\varepsilon}^* & = I + \Phi_{\varepsilon}^* \quad \mbox{ where } \lim_{\varepsilon \rightarrow 0} \Phi_{\varepsilon}^* = 0. \label{eqn:down} 
\end{align} 
\end{prop}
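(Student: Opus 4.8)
The plan is to avoid excursion-path arguments altogether and to read the limit off the Riccati equation $\FF_\epsi(X)=0$ recalled just above, via compactness together with an algebraic description of its admissible limit points. First I would record that each of $\Psi_\epsi$ and $\Psi^*_\epsi$ is a matrix of first-passage probabilities, hence has all entries in $[0,1]$ and all row sums at most $1$; so both families lie in the compact set of substochastic $m\times m$ matrices, and it suffices to show that every subsequential limit as $\epsi\to0$ equals $I$. (Note that Assumption~\ref{a:drift} is not needed for this: substochasticity, not stochasticity, is all that is used — which matters below because $\Psi^*_\epsi$ may genuinely be defective.)

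Next, along a sequence $\epsi_n\to0$ with $\Psi_{\epsi_n}\to\Psi_0$, I would pass to the limit in $\FF_{\epsi_n}(\Psi_{\epsi_n})=0$. The coefficient matrices $(\epsi D+\Theta)^{-1}$, $|\epsi D-\Theta|^{-1}=(\Theta-\epsi D)^{-1}$ (well defined for $\epsi$ small because $\Theta^{-1}$ exists, Assumption~\ref{ass:possigma}) and $\epsi^2Q-I$ are continuous in $\epsi$ near $0$, so $\FF_\epsi(X)$ is jointly continuous in $(\epsi,X)$ and therefore $\FF_0(\Psi_0)=0$. A one-line computation, which I would not spell out, gives $\FF_0(X)=(I-X)\Theta^{-1}(I-X)$, so $\Psi_0$ solves $(I-\Psi_0)\Theta^{-1}(I-\Psi_0)=0$.

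The crux is then the claim that $I$ is the only substochastic solution of $(I-X)\Theta^{-1}(I-X)=0$. Writing $N=X-I$, substochasticity says $N_{ij}\ge0$ for $i\ne j$, $N_{ii}\le0$, $N\vone\le\vzero$, and the equation becomes $N\Theta^{-1}N=0$. Since $\Theta^{-1}$ is diagonal with strictly positive entries, the $(i,i)$ entry of $N\Theta^{-1}N$ is $(\Theta^{-1})_{ii}N_{ii}^2+\sum_{k\ne i}(\Theta^{-1})_{kk}N_{ik}N_{ki}$, a sum of nonnegative terms; it can vanish only if $N_{ii}=0$ for every $i$, and then $N\vone\le\vzero$ together with $N_{ij}\ge0$ forces $N_{ij}=0$ for $i\ne j$ as well, i.e. $X=I$. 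This gives \eqref{eqn:up}. For \eqref{eqn:down} the argument is identical: $\Psi^*_\epsi$ solves the Riccati equation for upward first passage, which in this symmetric construction is $\FF_\epsi(X)=0$ with $D$ replaced by $-D$, and so again reduces to $(I-X)\Theta^{-1}(I-X)=0$ at $\epsi=0$; hence every subsequential limit of $\Psi^*_\epsi$ is $I$.

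I do not expect a serious obstacle here; the only delicate point, as noted, is that $\Psi^*_\epsi$ can be strictly substochastic for $\epsi>0$ (a downward excursion of the approximating fluid may never return when the relevant drift is negative), which is exactly why the uniqueness step must be phrased with substochasticity rather than stochasticity. The genuinely hard part of Lemma~\ref{lem:expansions} lies beyond this proposition: once $\Psi_\epsi\to I$ is known, one must divide by $\epsi$ to extract the first-order term of $\Psi_\epsi=I+\epsi\Psi_1+O(\epsi^2)$ and identify $\Psi_1$ through \eqref{eqn:theequation}, working precisely in the regime where the implicit function theorem degenerates — this, rather than Proposition~\ref{prop:updown}, is presumably the \textquotedblleft lengthy and tedious\textquotedblright\ part.
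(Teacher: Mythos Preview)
Your proof is correct and follows essentially the same strategy as the paper: compactness of the substochastic matrices, passing to the limit in the Riccati equation to obtain $(I-\bar\Psi)\Theta^{-1}(I-\bar\Psi)=0$, and then showing $I$ is the only substochastic solution. The paper's argument differs only cosmetically --- it carries the $1/\epsi$ factor explicitly via a decomposition $\mathcal G(\epsi)+\mathcal H(\epsi)=0$ (with $\mathcal G(\epsi)=(1/\epsi)(I-\Psi_\epsi)\Theta^{-1}(I-\Psi_\epsi)$ forced to be bounded) and then deduces $\bar\Psi=I$ from nilpotency of $\Theta^{-1}(I-\bar\Psi)$ and a trace argument, rather than your direct inspection of the diagonal entries of $N\Theta^{-1}N$.
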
 

\begin{proof}
We note that 
\begin{align}
(\varepsilon D + \Theta)^{-1} 
                & = \{I - \varepsilon \Theta^{-1}(-D)\}^{-1} \Theta^{-1} \nonumber \\
                & = \{I - \varepsilon \Theta^{-1} D + \varepsilon^2(\Theta^{-1}D)^2 + O(\varepsilon^3)\}\Theta^{-1} \nonumber \\
                & = \Theta^{-1} - \varepsilon V^{-1} D + \varepsilon^2\Theta^{-1}V^{-1} D^2 + O(\varepsilon^3), \label{eqn:expan1}
\intertext{and similarly} 
|\varepsilon D - \Theta|^{-1} & = \Theta^{-1} + \varepsilon V^{-1} D + \varepsilon^2 \Theta^{-1} V^{-1} D^2 + O(\varepsilon^3), \label{eqn:expan2} 
\end{align}
for $\varepsilon$ sufficiently small.  
Thus, \eqref{eqn:coffee} implies that
\begin{align}
& ({1}/{\varepsilon})\{\Theta^{-1} - \varepsilon V^{-1}D + O(\varepsilon^2)\}(I  + \varepsilon^2Q \Psi_{\varepsilon} - \Psi_{\varepsilon}) \nonumber \\
& \quad\quad + ({1}/{\varepsilon})\Psi_{\varepsilon}\{\Theta^{-1} + \varepsilon V^{-1}D + O(\varepsilon^2)\}(\varepsilon^2Q - I + \Psi_{\varepsilon}) = 0, \label{eqn:coffee2} 
\end{align} 
which can be reorganized as $\mathcal{G}(\varepsilon) + \mathcal{H}(\varepsilon) = 0$, where 
\begin{align}
\label{eqn:G}
\mathcal{G}(\varepsilon) & =  ({1}/{\varepsilon})(\Theta^{-1} -
\Theta^{-1} \Psi_{\varepsilon} - \Psi_{\varepsilon}\Theta^{-1} +
\Psi_{\varepsilon}\Theta^{-1} \Psi_{\varepsilon}) \\
\intertext{and}
\mathcal{H}(\epsi) & = ({1}/{\varepsilon})\{\varepsilon^2 \Theta^{-1} Q \Psi_{\varepsilon} + (-\varepsilon V^{-1}D + O(\varepsilon^2))(I + \varepsilon^2 Q \Psi_{\varepsilon} - \Psi_{\varepsilon}) \nonumber \\
&\quad  + \varepsilon^2 \Psi_{\varepsilon}\Theta^{-1} Q +  \Psi_{\varepsilon}(\varepsilon V^{-1} D + O(\varepsilon^2))(\varepsilon^2Q - I + \Psi_{\varepsilon})\}. \label{eqn:H}
\end{align}
The matrix
$\mathcal{H}(\epsi)$ is bounded and therefore $\GG(\epsi)$ too remains bounded as $\varepsilon \rightarrow 0$. 

Now, we observe that $\Psi_{\varepsilon}$ belongs to the compact set
$\{M: M \geq 0, M\bs{1}\leq \bs{1}\}$ of (sub)stochastic matrices;
therefore, for every sequence $\{\Psi_{\varepsilon}\}_{\varepsilon
  \rightarrow 0}$ there exist subsequences that converge. Let
$\bar{\Psi}$ be the limit of one such convergent subsequence, and
$\{\varepsilon_i\}_{i = 1, 2, \ldots}$ be a sequence such that
$\varepsilon_i \rightarrow 0$ and $\Psi_{\varepsilon_i} \rightarrow
\bar{\Psi}$ as $i \rightarrow \infty$. Since
$\mathcal{G}(\varepsilon_i)$ remains bounded as $i \rightarrow
\infty$, necessarily
\begin{align*}
\lim_{i \rightarrow \infty} (\Theta^{-1} - \Theta^{-1}\Psi_{\varepsilon_i} - \Psi_{\varepsilon_i}\Theta^{-1}  + \Psi_{\varepsilon_i} \Theta^{-1} \Psi_{\varepsilon_i}) & = \lim_{i \rightarrow \infty} (I - \Psi_{\varepsilon_i})\Theta^{-1}(I - \Psi_{\varepsilon_i})\\
& =  (I - \bar{\Psi})\Theta^{-1}(I - \bar{\Psi}) \\
& = 0,
\end{align*} 
and thus $\bar{\Psi} = I$. This follows from the facts that $\Theta^{-1}(I - \bar{\Psi})$ is a nilpotent matrix, that the trace of every nilpotent matrix is zero, and that $\bar{\Psi}$ is a (sub)stochastic matrix while $\Theta^{-1}$ is a strictly positive diagonal matrix. 

All convergent subsequences having the same limit, the conclusion is that $\Psi_{\varepsilon}$ converges to $I$ as $\varepsilon \rightarrow 0$, and \eqref{eqn:up} follows.  The proof of \eqref{eqn:down} is
by analogous arguments.
\end{proof} 

\begin{prop}
   \label{t:generator}
For $\epsi >0$, the matrices $\Phi_\epsi$ and $\Phi_\epsi^*$ are
irreducible with non-negative off-diagonal elements, strictly negative diagonal elements, $\Phi_\epsi \vone
\leq \vzero$ and $\Phi_\epsi^* \vone \leq \vzero$.
In addition, under
Assumption~\ref{a:drift}, $\Phi_\epsi \vone = \vzero$ and
$\Phi_\epsi^* \vone < \vzero$.

In short, $\Phi_\epsi$ is an irreducible generator and $\Phi_\epsi^*$
is an irreducible subgenerator.
\end{prop}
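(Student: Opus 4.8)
The plan is to read the sign structure directly off the probabilistic meaning of $\Psi_\epsi$ and $\Psi^*_\epsi$, to obtain irreducibility together with the strict sign of the diagonal from an explicit sample-path construction, and to obtain the two statements on $\Phi_\epsi\vone$ and $\Phi^*_\epsi\vone$ from the transience of the \emph{unreflected} fluid process under Assumption~\ref{a:drift}. Throughout I fix $\epsi$ small enough (as already assumed in the main text) that every rate in the block $\beta=1$ is strictly positive and every rate in the block $\beta=2$ is strictly negative, and I assume $m\geq 2$ (for $m=1$ the statement is only interesting without Assumption~\ref{a:drift}).

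First, by Proposition~\ref{prop:updown} we have $\Phi_\epsi=\Psi_\epsi-I$ and $\Phi^*_\epsi=\Psi^*_\epsi-I$, and both $\Psi_\epsi$ and $\Psi^*_\epsi$ lie in the compact set of substochastic matrices, since their rows record probabilities of disjoint first-passage events. Consequently the off-diagonal entries of $\Phi_\epsi$ and $\Phi^*_\epsi$ agree with those of $\Psi_\epsi$ and $\Psi^*_\epsi$ and are nonnegative, their diagonal entries are $\leq 0$, and $\Phi_\epsi\vone=\Psi_\epsi\vone-\vone\leq\vzero$, $\Phi^*_\epsi\vone\leq\vzero$. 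Next I would show $(\Psi_\epsi)_{ij}>0$ and $(\Psi^*_\epsi)_{ij}>0$ for all $i,j\in\mathcal{M}$. Starting from $(x,1,i)$, let $\varphi$ follow a $Q$-path from $i$ to $j$ while $\beta$ stays in block $1$ over a bounded time window: this has positive probability because $Q$ is irreducible and $\beta$ flips at a finite rate, and the level has then strictly increased to some $x+\delta$. Now let $\beta$ flip to block $2$ with $\varphi$ unchanged, and then let the process remain in $(2,j)$ with no further transition for exactly the (deterministic) time needed for the level to fall from $x+\delta$ back to $x$; this realises a first return below $x$ in phase $(2,j)$, so $(\Psi_\epsi)_{ij}>0$. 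The mirror-image construction — descend in $(2,i)$, move $\varphi$ to $j$, flip to $(1,j)$, climb back to $x$ — gives $(\Psi^*_\epsi)_{ij}>0$. Hence $\Psi_\epsi$ and $\Psi^*_\epsi$ are strictly positive, so $\Phi_\epsi$ and $\Phi^*_\epsi$ are irreducible; and since each row of a substochastic matrix sums to at most $1$ while every off-diagonal entry is strictly positive and $m\geq 2$, every diagonal entry of $\Psi_\epsi$ and $\Psi^*_\epsi$ is $<1$, i.e.\ the diagonal entries of $\Phi_\epsi$ and $\Phi^*_\epsi$ are strictly negative.

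Finally, under Assumption~\ref{a:drift} the unreflected process $\{L_\epsi(t),(\beta_\epsi(t),\varphi_\epsi(t))\}$ has phase generator $T_\epsi$ irreducible and stationary phase law $\vgamma\otimes\valpha$, so its mean drift equals $\tfrac12\valpha(D+\epsi^{-1}\Theta)\vone+\tfrac12\valpha(D-\epsi^{-1}\Theta)\vone=\valpha D\vone<0$; therefore $L_\epsi(t)\to-\infty$ almost surely from any starting state. For $\Phi_\epsi\vone=\vzero$: started from any $(x,1,i)$ the continuous, piecewise-linear path $L_\epsi$ eventually drops below $x$, and it can only cross $x$ downward while in block $2$, so $\xi^-_\epsi(x)<\infty$ a.s.\ with $\beta_\epsi(\xi^-_\epsi(x))=2$, giving $\Psi_\epsi\vone=\vone$. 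For $\Phi^*_\epsi\vone<\vzero$ componentwise, fix $i$: with positive probability the process started at $(x,2,i)$ stays in $(2,i)$ with no transition long enough for the level to reach $x-N$, and, because $\sup_{t\geq 0}\bigl(L_\epsi(t)-L_\epsi(0)\bigr)$ is a.s.\ finite (as $L_\epsi(t)\to-\infty$), for $N$ large the probability, from $(x-N,2,i)$, of never again attaining level $x$ is strictly positive; combining these two events yields $(\Psi^*_\epsi\vone)_i<1$ for every $i$. This shows $\Phi_\epsi\vone=\vzero$ and $\Phi^*_\epsi\vone<\vzero$, so $\Phi_\epsi$ is an irreducible generator and $\Phi^*_\epsi$ an irreducible subgenerator.

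The main obstacle is the last point, the componentwise strict inequality $\Phi^*_\epsi\vone<\vzero$: irreducibility of a subgenerator does \emph{not} by itself propagate a single strict row deficiency to all rows, so one genuinely needs the per-phase sample-path argument (descend deterministically, then invoke transience). A secondary point requiring care, used repeatedly above, is the identification of the block in force at the first-passage instants, which rests on the continuity and piecewise-linearity of the fluid paths.
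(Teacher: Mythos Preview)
Your proof is correct and follows essentially the same probabilistic route as the paper: both deduce the sign structure of $\Phi_\epsi$ and $\Phi^*_\epsi$ from the (sub)stochasticity and irreducibility of $\Psi_\epsi$ and $\Psi^*_\epsi$, and both invoke the negative mean drift of the unreflected fluid to get $\Psi_\epsi$ stochastic and $\Psi^*_\epsi$ strictly substochastic under Assumption~\ref{a:drift}. The only difference is expository: the paper cites these as standard fluid-queue facts, whereas you supply explicit sample-path constructions --- including the per-row argument for $\Psi^*_\epsi\vone<\vone$ componentwise, a point the paper's one-line ``strictly substochastic'' leaves implicit.
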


\begin{proof}
  As we assume that the fluid queue is irreducible,
  $\Psi_{\varepsilon}$ is an irreducible (sub)stochastic 
  matrix for all~$\varepsilon \geq 0$. Thus, we conclude from
  (\ref{eqn:up}) that $\Phi_\epsi$   is irreducible and that its
  off-diagonal elements  are
  nonnegative.  Furthermore, since $\Psi_{\varepsilon}\bs{1} 
  \leq \bs{1}$, this implies that $\Phi_\epsi \bs{1} \leq \bs{0}$, so
  that its diagonal elements are strictly negative and $\Phi_\epsi$ is
  a generator.  The same argument holds for $\Phi_\epsi^*$.

Under Assumption~\ref{a:drift}, the matrix $\Psi_\epsi$ is stochastic
and the matrix $\Psi_\epsi^*$ is strictly substochastic, and the last
claim follows.
\end{proof}

\begin{prop} \label{prop:phi}
  The matrices $(1/\varepsilon) \Phi_{\varepsilon}$ and
  $(1/\varepsilon)\Phi_{\varepsilon}^*$ are bounded.  Denoting by
  $\bar{\Psi}_1$ and $\bar{\Psi}_1^*$ the limits of any converging
  subsequences of $(1/\varepsilon) \Phi_{\varepsilon}$ and
  $(1/\varepsilon)\Phi_{\varepsilon}^*$ respectively, both
  $\bar{\Psi}_1$ and $-\bar{\Psi}_1^*$ are solutions of the equation
\begin{equation}
   \label{eqn:sun}
(\Theta^{-1} Y)^2 + 2 V^{-1} D \Theta^{-1} Y +  2 V^{-1} Q = 0,
\end{equation}
and are irreducible.
\end{prop}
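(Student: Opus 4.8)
The plan is to start from the defining equation $\FF_\epsi(\Psi_\epsi)=0$, written in the expanded form~\eqref{eqn:coffee2}, substitute $\Psi_\epsi = I + \Phi_\epsi$ (from Proposition~\ref{prop:updown}), and isolate the leading-order behaviour of $\Phi_\epsi/\epsi$. First I would rewrite \eqref{eqn:coffee2} using the decomposition $\GG(\epsi)+\HH(\epsi)=0$ already established, and observe that $\GG(\epsi) = (1/\epsi)(I-\Psi_\epsi)\Theta^{-1}(I-\Psi_\epsi) = (1/\epsi)\Phi_\epsi\Theta^{-1}\Phi_\epsi$. Thus the equation becomes
\begin{align*}
\tfrac{1}{\epsi}\Phi_\epsi\Theta^{-1}\Phi_\epsi + \HH(\epsi) = 0.
\end{align*}
Now I would expand $\HH(\epsi)$ from~\eqref{eqn:H}, grouping terms by order of $\epsi$. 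Using $\Psi_\epsi = I+\Phi_\epsi \to I$ and $\Phi_\epsi \to 0$, the dominant contribution of $\HH(\epsi)$ is $\epsi(\Theta^{-1}Q + \Theta^{-1}Q - V^{-1}D\cdot(\text{something involving }\Phi_\epsi/\epsi) + \ldots)$; more carefully, collecting the $\epsi^1$-order terms after dividing through by $\epsi$ gives an expression of the form $2\Theta^{-1}Q + 2 V^{-1}D\,(\Phi_\epsi/\epsi) + (\Phi_\epsi/\epsi)\Theta^{-1}(\Phi_\epsi/\epsi)\cdot\epsi + o(1)$ plus genuinely vanishing remainders.

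The key step is to establish boundedness of $(1/\epsi)\Phi_\epsi$. For this I would argue by contradiction: suppose along some sequence $\epsi_i \to 0$ we have $\|\Phi_{\epsi_i}\|/\epsi_i \to \infty$. Normalise by setting $N_i = \Phi_{\epsi_i}/\|\Phi_{\epsi_i}\|$, a unit-norm matrix, and pass to a convergent subsequence $N_i \to \bar N \neq 0$. Dividing the equation $\tfrac{1}{\epsi}\Phi_\epsi\Theta^{-1}\Phi_\epsi + \HH(\epsi)=0$ by $\|\Phi_{\epsi_i}\|$ and examining orders: the first term is $(\|\Phi_{\epsi_i}\|/\epsi_i)N_i\Theta^{-1}\Phi_{\epsi_i}$, which after a further division argument forces $\bar N\Theta^{-1}\bar N = 0$ in the limit (since $\HH(\epsi)/\|\Phi_{\epsi_i}\|$ must stay bounded while the prefactor blows up), so $\Theta^{-1}\bar N$ is nilpotent. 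But $\Phi_\epsi$ is an irreducible generator (Proposition~\ref{t:generator}) with $\Phi_\epsi\vone\le\vzero$, hence $\bar N \le$ off-diagonally nonnegative, diagonal $\le 0$, $\bar N \vone \le \vzero$; combined with $\Theta^{-1}\bar N$ nilpotent (trace zero) and $\Theta^{-1}$ strictly positive diagonal, this forces $\bar N$ to have zero diagonal and then zero off-diagonal too, i.e. $\bar N = 0$, contradicting $\|\bar N\|=1$. This is essentially the same nilpotency argument used in the proof of Proposition~\ref{prop:updown}, now applied to the normalised increments. The hard part is making this contradiction argument watertight — keeping track of which terms in $\HH(\epsi)$ are $O(\epsi)$, $O(\epsi\|\Phi_\epsi\|)$, $O(\|\Phi_\epsi\|^2)$, etc., and verifying that after the two successive normalisations the only surviving term is the quadratic one.

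Once boundedness is in hand, let $\bar\Psi_1$ be the limit of any convergent subsequence of $(1/\epsi)\Phi_\epsi$. Dividing $\tfrac{1}{\epsi}\Phi_\epsi\Theta^{-1}\Phi_\epsi + \HH(\epsi)=0$ by $\epsi$ and passing to the limit along that subsequence: the term $\tfrac{1}{\epsi^2}\Phi_\epsi\Theta^{-1}\Phi_\epsi \to \bar\Psi_1\Theta^{-1}\bar\Psi_1$, and $(1/\epsi)\HH(\epsi)\to 2V^{-1}D\,\bar\Psi_1 + 2V^{-1}Q$ (the $\epsi^2 Q$ terms contribute nothing at this order, while the $\mp\epsi V^{-1}D(I-\Psi_\epsi)$ terms contribute $+V^{-1}D\bar\Psi_1$ each after dividing by $\epsi$, the $O(\epsi^2)$ remainders vanishing). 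This yields exactly
\begin{align*}
\bar\Psi_1\Theta^{-1}\bar\Psi_1 + 2V^{-1}D\,\Theta^{-1}\bar\Psi_1 + 2V^{-1}Q = 0,
\end{align*}
which is~\eqref{eqn:sun} with $Y=\bar\Psi_1$; noting $\bar\Psi_1\Theta^{-1}\bar\Psi_1 = \Theta(\Theta^{-1}\bar\Psi_1)^2$ — wait, more directly $\bar\Psi_1\Theta^{-1}\bar\Psi_1 = (\Theta^{-1}\bar\Psi_1)\cdot$ ... I would simply verify $\Theta^{-1}\bar\Psi_1$ satisfies~\eqref{eqn:theequation} by left-multiplying, but as stated the claim is about $\bar\Psi_1$ solving~\eqref{eqn:sun}, which is immediate. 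The analogous computation for $\Phi_\epsi^*$ — starting from the Riccati equation for $\Psi_\epsi^*$ and using~\eqref{eqn:down} — gives that $-\bar\Psi_1^*$ solves the same equation, with the sign flip arising from the roles of $C_\epsi^+$ and $C_\epsi^-$ being interchanged. Finally, irreducibility of $\bar\Psi_1$ and $\bar\Psi_1^*$ follows because each is a limit of $(1/\epsi)\Phi_\epsi$ (resp. $(1/\epsi)\Phi_\epsi^*$) with $\Phi_\epsi$ an irreducible generator: the off-diagonal entries of $(1/\epsi)\Phi_\epsi$ are nonnegative for every $\epsi>0$, and the irreducibility graph is inherited in the limit by a standard argument (any zero off-diagonal pattern in the limit would have to hold along the subsequence, contradicting irreducibility of each $\Phi_\epsi$ — one uses that the zero pattern is lower semicontinuous and the $\Phi_\epsi$ all share the same support-closure coming from the generator $Q$).
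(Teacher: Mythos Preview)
Your overall strategy is the paper's: substitute $\Psi_\epsi = I+\Phi_\epsi$ so that $\GG(\epsi)=(1/\epsi)\Phi_\epsi\Theta^{-1}\Phi_\epsi$, rule out unboundedness of $(1/\epsi)\Phi_\epsi$ by a normalisation-plus-nilpotency contradiction, and then pass to the limit along a convergent subsequence to obtain~\eqref{eqn:sun}. The boundedness argument is essentially identical to the paper's Case~2 (the paper normalises by the largest diagonal entry rather than the operator norm, but the trace argument is the same), and your limit computation matches the paper's Case~3 up to a minor algebraic slip: the constant term coming out of $(1/\epsi)\HH(\epsi)$ is $2\Theta^{-1}Q$, not $2V^{-1}Q$, so the equation you first obtain is $\bar\Psi_1\Theta^{-1}\bar\Psi_1 + 2V^{-1}D\bar\Psi_1 + 2\Theta^{-1}Q = 0$; left-multiplying by $\Theta^{-1}$ (using that $\Theta$, $V$, $D$ are diagonal and $V=\Theta^2$) then gives~\eqref{eqn:sun}. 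You sensed something was off here, and this is what it was.

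The genuine gap is the irreducibility argument. It is \emph{not} true that a limit of irreducible generators is irreducible: for instance
\[
\vligne{-1 & 1 \\ \epsi & -\epsi}
\]
is an irreducible generator for every $\epsi>0$, yet its limit as $\epsi\to 0$ is reducible. Your appeal to ``the zero pattern is lower semicontinuous'' goes the wrong way --- entries that are strictly positive along the sequence may well vanish in the limit --- and ``the $\Phi_\epsi$ all share the same support-closure coming from $Q$'' does not survive passage to the limit either. The paper's argument is entirely different and uses the equation itself: if $\bar\Psi_1$ were reducible then, since $\Theta$ is diagonal, $\Theta^{-1}\bar\Psi_1$ would admit (after a permutation) a block lower-triangular structure
\[
\Theta^{-1}\bar\Psi_1 = \vligne{M_A & 0 \\ M_{AB} & M_B},
\]
and substituting this into $(\Theta^{-1}\bar\Psi_1)^2 + 2V^{-1}D\,\Theta^{-1}\bar\Psi_1 + 2V^{-1}Q = 0$ forces $V^{-1}Q$, hence $Q$, to have the same block structure, contradicting the standing assumption that $Q$ is irreducible. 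You need this algebraic argument (or something equivalent) in place of the limiting-support claim.
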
 
\begin{proof} 
Substituting \eqref{eqn:up} into (\ref{eqn:G}, \ref{eqn:H})  gives us 
\begin{align*}
\mathcal{G}(\varepsilon) 
& =  ({1}/{\varepsilon}) \{\Phi_{\varepsilon}\Theta^{-1}\Phi_{\varepsilon}\}
\\
 \mathcal{H}(\varepsilon) & = \varepsilon \Theta^{-1} Q (I + \Phi_{\varepsilon}) + [-V^{-1}D + O(\varepsilon)](\varepsilon^2 Q + \varepsilon^2 Q \Phi_{\varepsilon}- \Phi_{\varepsilon})  \\
 & \;\;\;\; + \varepsilon (I + \Phi_{\varepsilon}) \Theta^{-1} Q + (I + \Phi_{\varepsilon})[V^{-1} D + O(\varepsilon)](\varepsilon^2Q + \Phi_{\varepsilon}).
\end{align*} 
Clearly, $\lim_{\epsi \rightarrow 0} \HH(\epsi) = 0$ and this implies that 
$\lim_{\epsi \rightarrow 0}({1}/{\varepsilon}) \{\Phi_{\varepsilon}\Theta^{-1}\Phi_{\varepsilon}\}=0$ since $\mathcal{G}(\varepsilon) + \mathcal{H}(\varepsilon) = 0$.  Divide both sides of that equation by $\epsi$ and obtain
\begin{equation} \label{eqn:laststep}
({1}/{\varepsilon)^2} \Phi_{\varepsilon} \Theta^{-1}\Phi_{\varepsilon} + 2\Theta^{-1}Q + \mathcal{R}(\varepsilon) = 0
\end{equation} 
where
\begin{align} 
\mathcal{R}(\varepsilon) & =  \Theta^{-1}Q\Phi_{\varepsilon}  + [-V^{-1}D + O(\varepsilon)](\varepsilon Q + \varepsilon Q \Phi_{\varepsilon} - ({1}/{\varepsilon})\Phi_{\varepsilon}) \nonumber \\
& \;\;\;\; + \Phi_{\varepsilon} \Theta^{-1}Q + (I + \Phi_{\varepsilon})[V^{-1}D + O(\varepsilon)]\{\varepsilon Q + ({1}/{\varepsilon})\Phi_{\varepsilon}\}.
\label{eqn:R}
\end{align} 
Now, there are three possible cases. 

\paragraph{Case 1:} $(1/\varepsilon) \Phi_{\varepsilon} \rightarrow 0$ as $\varepsilon \rightarrow 0$. Then, $\mathcal{R}(\varepsilon) \rightarrow 0$ and taking the limit of both sides of \eqref{eqn:laststep} as $\varepsilon \rightarrow 0$ leads to $2V^{-1}Q = 0$, which is not true. 

\paragraph{Case 2:} $(1/\varepsilon) \Phi_{\varepsilon}$ is unbounded
in any neighborhood of $\varepsilon = 0$. Then, there exists a
sequence $\{\varepsilon_k\}_{k = 1, 2 \ldots}$ such that
$\varepsilon_k \rightarrow 0$ and
$\max_{ij}|\Phi_{\varepsilon_k}|_{ij} / \varepsilon_k\rightarrow
\infty$. 
In this case, we may write $(1/\varepsilon)\Phi_{\varepsilon} =
u_{\varepsilon} B_{\varepsilon}$, where $u_{\varepsilon}$ is a scalar
function such that $\lim_{k \rightarrow \infty} u_{\varepsilon_k} =
\infty$ while $B_{\varepsilon_k}$ remains bounded and does not
converge to zero: 
since $\Phi_\epsi$
is an irreducible generator, its maximum element is on the diagonal
and we 
take $u_{\varepsilon} =
\max_{j}|\Phi_{\varepsilon}|_{jj} / \varepsilon$,
then $B_\epsi$ is an irreducible generator with at least one diagonal
element equal to $-1$, and with $|B_{ij}| \leq 1$ for all $i$ and $j$. 

Next, for $\epsi$
in the sequence $\{\varepsilon_k\}$, we replace $(1/\epsi) \Phi_\epsi$
in~\eqref{eqn:laststep} by $u_\epsi B_\epsi$ and divide both sides of
the equation by $u_\epsi^2$ to obtain
\begin{eqnarray}
\lefteqn{ B_{\varepsilon} \Theta^{-1} B_{\varepsilon} + (2/u_\epsi^2) \Theta^{-1} Q + (\varepsilon /u_{\varepsilon}) \Theta^{-1} Q B_{\varepsilon}}  \nonumber \\
& & + [-V^{-1}D + O(\varepsilon)](\varepsilon / u_\epsi^2 Q + \varepsilon^2 /u_{\varepsilon} Q B_{\varepsilon} - (1/u_{\varepsilon})B_{\varepsilon}) \nonumber \\
& & +  ((1/u_\epsi) I + \varepsilon B_{\varepsilon})[V^{-1}D + O(\varepsilon)]((\varepsilon/u_\epsi) Q + B_{\varepsilon}) = 0, \label{eqn:reallaststep}
\end{eqnarray} 
This implies that 
\begin{equation}
   \label{e:BB}
\lim_{k \rightarrow \infty} B_{\varepsilon_k} \Theta^{-1} B_{\varepsilon_k} = 0,
\end{equation}
Now, take any converging subsequence of $B_\epsi$ and denote its limit as
$B$.  By construction, the trace of $\Theta^{-1} B_\epsi$ is at most
equal to $\min_j (-\sigma_j^{-1}) < 0$, independently of $\epsi$.
Thus, the trace of $\Theta^{-1} B$ is strictly negative, the matrix
$\Theta^{-1} B$ is not nilpotent, and $\Theta^{-1} B\Theta^{-1} B
\not= 0$, which contradicts (\ref{e:BB}).

\paragraph{Case 3:} $(1/\varepsilon)\Phi_{\varepsilon}$ is bounded and
does not converge to  $0$. Then, from \eqref{eqn:R}
\[
\mathcal{R}(\varepsilon)  =  ({1}/{\varepsilon}) 2V^{-1}D \Phi_{\varepsilon} + \mathcal{R}^*(\varepsilon) 
\]
where $\mathcal{R}^*(\varepsilon)$ goes to 0 as $\epsi$ goes to zero.
This allows us to rewrite \eqref{eqn:laststep} as 
\[
({1}/{\varepsilon^2})\Phi_{\varepsilon} \Theta^{-1}\Phi_{\varepsilon} + 2\Theta^{-1}Q + ({1}/{\varepsilon}) 2V^{-1}D \Phi_{\varepsilon} + \mathcal{R}^*(\varepsilon)  = 0
\]
and to conclude that    
\begin{equation}
   \label{eq:psiepse}
\lim_{\epsi \rightarrow 0} \{
({1}/{\varepsilon^2})\Phi_{\varepsilon} \Theta^{-1}\Phi_{\varepsilon} + 2\Theta^{-1}Q + ({1}/{\varepsilon}) 2V^{-1}D \Phi_{\varepsilon} + \mathcal{R}^*(\varepsilon) \} = 0.
\end{equation}
Since $(1/\epsi) \Phi_\epsi$ is bounded, there exist subsequences
$\{\epsi_k\}_{k=1, 2, \ldots}$ such that $\epsi_k \rightarrow 0$ and
such that $(1/\epsi_k) \Phi_{\epsi_k} \rightarrow \bar{\Psi}_1$.  We
take in (\ref{eq:psiepse}) the limit along such a subsequence and
conclude that $\bar{\Psi}_1$ is a solution of (\ref{eqn:sun}).  The
same approach is followed for $\Phi^*_\epsi$.

Now, assume that $\bar\Psi_1$ is reducible.  We may write
\[
\Theta^{-1} \bar\Psi_1 = \vligne{M_A & 0 \\ M_{AB}  & M_B},
\]
possibly after a permutation of rows and columns, where $M_A$ and
$M_B$ are square matrices.   As $\Theta^{-1} \bar\Psi_1$ is a solution
of (\ref{eqn:sun}), we are led to conclude that
\[
Q = \vligne{Q_A & 0 \\ Q_{AB}  & Q_B}
\]
which contradicts our assumption that $Q$ is irreducible.  Thus,
$\bar\Psi_1$ is irreducible, and so is $\bar\Psi_1^*$ by the same
argument.
\end{proof} 

\begin{prop}
   \label{t:factorization}
   Consider the matrix equation 
\begin{equation}
   \label{e:factor}
V X^2 +2DX + 2Q =0
\end{equation}
and its associated matrix polynomial $\Gamma(z) = V z^2 + 2 D z +2Q$.

Under Assumption \ref{a:drift}, $\det \Gamma(z)$ has one root equal to zero,
$m-1$ roots with strictly negative real parts, and $m$ roots with
strictly positive real parts.
\end{prop}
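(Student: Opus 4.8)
The plan is to work directly with the polynomial $\det\Gamma(z)=\det(z^{2}V+2zD+2Q)$, which has exact degree $2m$ because $\det V\neq 0$ (Assumption~\ref{ass:possigma}) and hence has precisely $2m$ zeros counted with multiplicity, and to locate these zeros by a homotopy/continuity argument that deforms $(V,D)$ to a trivially solvable model while preserving every qualitative feature that matters.

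First I would pin down the zero at the origin. Clearly $\det\Gamma(0)=\det(2Q)=0$, and by Jacobi's formula $\frac{d}{dz}\det\Gamma(z)\big|_{z=0}=\mathrm{tr}\bigl(\mathrm{adj}(2Q)\,2D\bigr)=2^{m}\,\mathrm{tr}\bigl(\mathrm{adj}(Q)\,D\bigr)$. Since the irreducible generator $Q$ has rank $m-1$, $\mathrm{adj}(Q)$ has rank one, and from $Q\,\mathrm{adj}(Q)=\mathrm{adj}(Q)\,Q=0$ together with irreducibility one obtains $\mathrm{adj}(Q)=c\,\vone\valpha$ for some $c\neq 0$; therefore $\frac{d}{dz}\det\Gamma(z)\big|_{z=0}=2^{m}c\,\valpha D\vone\neq 0$ by Assumption~\ref{a:drift}, so $z=0$ is a simple zero. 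Next, $\det\Gamma$ has no other zero on the imaginary axis: for real $\omega\neq 0$ the matrix $\Gamma(i\omega)=-\omega^{2}V+2i\omega D+2Q$ is strictly row–diagonally dominant, because in row $j$ the off-diagonal absolute values sum to $-2Q_{jj}$ while the diagonal entry has modulus at least $\omega^{2}\sigma_j^{2}-2Q_{jj}>-2Q_{jj}$, the strict inequality coming from $\sigma_j^{2}>0$; hence $\Gamma(i\omega)$ is invertible.

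Now the homotopy. Fix $\mu_*<0$, set $V_\tau=(1-\tau)V+\tau I$ and $D_\tau=(1-\tau)D+\tau\mu_* I$ for $\tau\in[0,1]$, and write $\Gamma_\tau(z)=z^{2}V_\tau+2zD_\tau+2Q$. Each $V_\tau$ has strictly positive diagonal and $\valpha D_\tau\vone=(1-\tau)\valpha D\vone+\tau\mu_*<0$, so the two facts above hold for every $\tau$: $\det\Gamma_\tau$ has $2m$ zeros, $0$ is always a simple zero, no other zero ever touches the imaginary axis, and all zeros stay in a bounded set independent of $\tau$. By the argument principle the number of zeros of $\det\Gamma_\tau$ with positive real part equals a contour integral over a large half-rectangle indented by a small arc around the persistent simple zero at $0$, and this is a continuous, integer-valued function of $\tau$, hence constant. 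At $\tau=1$ one computes the zeros eigenvalue by eigenvalue of $Q$: the equation $z^{2}+2\mu_* z+2q=0$ gives $0$ and $-2\mu_*>0$ for the simple eigenvalue $q=0$, and, for each of the $m-1$ eigenvalues with $\re(q)<0$, exactly one root strictly in $\{\re(z)>0\}$ and one strictly in $\{\re(z)<0\}$ (using $\re\sqrt{\mu_*^{2}-2q}>|\mu_*|$ when $\re(q)<0$). Thus $\det\Gamma_1$ has $m$ zeros with positive real part, $m-1$ with negative real part, and one at $0$; transporting this back to $\tau=0$ proves the proposition.

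The step requiring the most care is making the constancy of the half-plane count rigorous: one must check that the zeros stay in a $\tau$-independent region and that no zero other than $0$ ever approaches the imaginary axis (both of which follow, via compactness, from the uniform simplicity of the zero at $0$), then accommodate that persistent zero by indenting the contour, and finally invoke joint continuity of the winding-number integral in $(\tau,z)$. An alternative argument, closer to the surrounding propositions, uses the identity $z^{2}I+2zV^{-1}D+2V^{-1}Q=(zI+X+2V^{-1}D)(zI-X)$, valid for any solution $X$ of \eqref{e:factor}: applying it to $X=\Theta^{-1}\bar{\Psi}_1$, which is an irreducible generator by Propositions~\ref{prop:updown}--\ref{prop:phi}, and to $X=-\Theta^{-1}\bar{\Psi}_1^{*}$, for which $\Theta^{-1}\bar{\Psi}_1^{*}$ is an irreducible subgenerator, shows that $\det(zI-X)$ divides $\det\Gamma(z)$ in both cases, and together with the two steps above this forces the splitting — but only once one knows that $-\Theta^{-1}\bar{\Psi}_1^{*}$ is nonsingular, i.e.\ that the upward first-passage solution is a strictly transient subgenerator; establishing that is exactly where Assumption~\ref{a:drift} is used in an essential, somewhat delicate way, and it is the genuine obstacle along that route.
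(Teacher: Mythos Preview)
Your proof is correct and follows a genuinely different route from the paper's. The paper argues via the fluid approximation itself: it takes subsequential limits $\bar{\Psi}_1$ and $\bar{\Psi}_1^*$ of the scaled first-passage matrices $(1/\varepsilon)\Phi_\varepsilon$ and $(1/\varepsilon)\Phi_\varepsilon^*$ (supplied by Propositions~\ref{prop:updown}--\ref{prop:phi}), notes that $\Theta^{-1}\bar{\Psi}_1$ is an irreducible generator and $\Theta^{-1}\bar{\Psi}_1^*$ an irreducible strict subgenerator (Proposition~\ref{t:generator} and Assumption~\ref{a:drift}), and then uses the explicit factorizations $\Gamma(z)=(Vz+V\Theta^{-1}\bar{\Psi}_1+2D)(zI-\Theta^{-1}\bar{\Psi}_1)$ and its companion for $\bar{\Psi}_1^*$ to read off the root locations from the spectra of those two matrices; a final count shows these exhaust the $2m$ roots. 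Your argument instead works directly on $\det\Gamma(z)$ with classical analytic tools: Jacobi's formula together with the rank-one structure $\mathrm{adj}(Q)=c\,\vone\valpha$ for simplicity at $z=0$, strict row diagonal dominance of $\Gamma(i\omega)$ to exclude any other imaginary root, and a homotopy $(V,D)\rightsquigarrow(I,\mu_*I)$ along which the right-half-plane count is locked by the argument principle and is computable eigenvalue-by-eigenvalue over $\sp(Q)$ at the endpoint.

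The trade-off is clean. Your approach is entirely self-contained and decouples the proposition from the fluid construction, and your diagonal-dominance step is a nice replacement for any spectral gymnastics on the imaginary axis. The paper's approach, by contrast, simultaneously yields the Wiener--Hopf factorizations (\ref{e:whgamma}) and (\ref{e:whgammab}), which are reused verbatim in the proof of Lemma~\ref{th:k}, and ties the half-plane split directly to the matrices $\Psi_1$, $\Psi_1^*$ whose uniqueness is the ultimate target of Lemma~\ref{lem:expansions}. Your closing paragraph in fact sketches the paper's route and correctly isolates its delicate point: one must know that the limit $\bar{\Psi}_1^*$ remains a \emph{strict} subgenerator, i.e.\ that the inequality $\Phi_\varepsilon^*\vone<\vzero$ does not collapse to equality under $\varepsilon\to 0$. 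The paper asserts this without further comment; your own simplicity-at-zero computation actually closes that gap, since a second generator solution would force $z=0$ to be a double root of $\det\Gamma$.
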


\begin{proof}
Take
  $\{\epsi_k\}_{k=1, 2, \ldots}$ to be a subsequence such that
  $\epsi_k \rightarrow 0$ and  $(1/\epsi_k) \Phi_{\epsi_k} \rightarrow
  \bar{\Psi}_1$.  By Proposition \ref{prop:phi}, $\Theta^{-1}
  \bar{\Psi}_1$ is an irreducible generator and a solution of
  \eqref{e:factor},
and we write 
\begin{align}
  \nonumber
\Gamma(z) & = z^2 V + 2 D z +2Q - V((\Theta^{-1} \bar{\Psi}_1)^2 + 2V^{-1} D \Theta^{-1} \bar{\Psi}_1 +  2 V^{-1} Q)  \\
   \label{e:whgamma}
 & = (Vz + V \Theta^{-1} \bar{\Psi}_1 + 2D)(zI - \Theta^{-1} \bar{\Psi}_1).
\end{align}
We conclude that all eigenvalues of $\Theta^{-1} \bar{\Psi}_1$ are
roots of $\det \Gamma(z)$. 
As we assume that the fluid queue is positive recurrent, $\Phi_\epsi
\vone = \vzero$ for all $\epsi$ by Proposition \ref{t:generator} and
so $\Theta^{-1} \bar{\Psi}_1 \vone = \vzero$.  Hence, 
$\det \Gamma(z)$ has at least one root equal to zero, and at least $m - 1$
roots with strictly negative real parts.

In a similar manner, we take a subsequence $\{\varepsilon^*_k\}_{k =
  1, 2, \ldots}$ such that $\varepsilon^*_k \rightarrow 0$ and
$(1/\varepsilon^*_k)\Phi^*_{\varepsilon^*_k} \rightarrow
\bar{\Psi}^*_1$, and we show that
\begin{equation}
   \label{e:whgammab}
\Gamma(z) = (Vz - V \Theta^{-1} \bar{\Psi}^*_1 + 2D)(zI + \Theta^{-1} \bar{\Psi}^*_1).
\end{equation}
Therefore, all eigenvalues of $-\Theta^{-1}
\bar{\Psi}^*_1$ are roots of $\det \Gamma(z)$ and, since $\bar{\Psi}^*_1 \vone
< \vzero$, this shows that $\det \Gamma(z)$ has at least $m$ roots with
strictly positive real part.

The polynomial $\det \Gamma(z)$ has at most $2m$ roots, which concludes the proof.
\end{proof}

\vspace{1\baselineskip}

\noindent
Now we are ready to conclude.   
  By the properties of the roots of $\det \Gamma(z)$ given in
  Proposition~\ref{t:factorization}, \eqref{eqn:theequation} has one
  unique solution suitable for the role of $\bar{\Psi}_1$ and another
  unique solution suitable for the role of
  $\bar{\Psi}^*_1$. Consequently, all convergent subsequences give the
  same limit $\Psi_1$ for $(1/\varepsilon_k)\Phi_{\varepsilon_k}$, and
  $\Psi_1^*$ for $(1/\varepsilon_k)\Phi^*_{\varepsilon_k}$.
\qed 
	

\bibliographystyle{abbrv}
\bibliography{LNR}

\begin{thebibliography}{10}

\bibitem{ra13}
S.~Ahn and V.~Ramaswami.
\newblock An approach to the stationary and transient analysis on the {M}arkov
  modulated fluid model.
\newblock {\em Private communication}, 2012.

\bibitem{as97}
N.~Akar and K.~Sohraby.
\newblock An invariant subspace approach in {M/G/1} and {G/M/1} type {Markov }
  chains.
\newblock {\em Comm. Statist. Stochastic Models}, 13:381--416, 1997.

\bibitem{apostol97}
T.~M. Apostol.
\newblock {\em Modular Functions and Dirichlet Series in Number Theory}.
\newblock New {Y}ork: {S}pringer--{V}erlag, 1997.

\bibitem{asmussen95}
S.~Asmussen.
\newblock Stationary distributions for fluid flow models with or without
  {B}rownian noise.
\newblock {\em Communications in {S}tatistics: {S}tochastic {M}odels},
  11(1):21--49, 1995.

\bibitem{ao00}
S.~Asmussen and O.~Kella.
\newblock A multi-dimensional martingale for {M}arkov additive processes and
  its applications.
\newblock {\em Adv. Appl. Probab.}, 32:376--393, 2000.

\bibitem{bg05}
D.~A. Bini and L.~Gemignani.
\newblock Solving quadratic matrix equations and factoring polynomials: new
  fixed point iterations based on {Schur} complements of {Toeplitz} matrices.
\newblock {\em Numer. Linear Algebra Appl.}, 12:181--189, 2005.

\bibitem{bgm01}
D.~A. Bini, L.~Gemignani, and B.~Meini.
\newblock Computations with infinite {Toeplitz} matrices and polynomials.
\newblock {\em Linear Algebra Appl.}, 343-344:21--61, 2002.

\bibitem{blm01b}
D.~A. Bini, G.~Latouche, and B.~Meini.
\newblock Solving matrix polynomial equations arising in queueing problems.
\newblock {\em Linear Algebra Appl.}, 340:225--244, 2002.

\bibitem{blm05}
D.~A. Bini, G.~Latouche, and B.~Meini.
\newblock {\em Numerical Methods for Structured Markov Chains}.
\newblock Numerical Mathematics and Scientific Computation. Oxford University
  Press, Oxford, 2005.

\bibitem{breuer08}
L.~Breuer.
\newblock First passage times for {M}arkov additive processes with positive
  jumps of {P}hase-{T}ype.
\newblock {\em {J}ournal of {A}pplied {P}robability}, 45:778--799, 2008.

\bibitem{dikm12}
B.~D'Auria, J.~Ivanovs, O.~Kella, and M.~Mandjes.
\newblock Two-sided reflection of {M}arkov-modulated {B}rownian motion.
\newblock {\em Stoch. Models}, 28(2):316--332, 2012.

\bibitem{dk12}
B.~D'Auria and O.~Kella.
\newblock Markov modulation of a two-sided reflected {B}rownian motion with
  application to fluid queues.
\newblock {\em Stoch. Proc. Appl.}, 122(4):1566--1581, 2012.

\bibitem{ekr01}
E.~Enikeeva, V.~Kalashnikov, and D.~Rusaityte.
\newblock Continuity estimates for ruin probabilities.
\newblock {\em Scand. Actuarial J.}, 2001(1):18--39, 2001.

\bibitem{glr11b}
M.~Govorun, G.~Latouche, and M.-A. Remiche.
\newblock Stability for fluid queues: characteristic inequalities.
\newblock {\em Stoch. Models}, 29(1):64--88, 2013.

\bibitem{higham08}
N.~J. Higham.
\newblock {\em Functions of {M}atrices: {T}heory and {C}omputation}.
\newblock SIAM, 2008.

\bibitem{ivanovs-japs10}
J.~Ivanovs.
\newblock Markov-modulated {B}rownian motion with two reflecting barriers.
\newblock {\em Journal of {A}pplied {P}robability}, 47:1034--1047, 2010.

\bibitem{kk95}
R.~L. Karandikar and V.~Kulkarni.
\newblock Second-order fluid flow models: {R}eflected {B}rownian motion in a
  random environment.
\newblock {\em Oper. Res}, 43:77--88, 1995.

\bibitem{kato66}
T.~Kato.
\newblock {\em Perturbation Theory for Linear Operators}.
\newblock Springer-Verlag, New York, 2nd edition, 1976.

\bibitem{lr99}
G.~Latouche and V.~Ramaswami.
\newblock {\em Introduction to Matrix Analytic Methods in Stochastic Modeling}.
\newblock ASA-SIAM Series on Statistics and Applied Probability. SIAM,
  Philadelphia PA, 1999.

\bibitem{loyne62b}
R.~M. Loynes.
\newblock Stationary waiting-time distribution for single-server queues.
\newblock {\em Ann. Math. Statist.}, 33:1323--1339, 1962.

\bibitem{neuts81}
M.~F. Neuts.
\newblock {\em Matrix-Geometric Solutions in Stochastic Models: An Algorithmic
  Approach}.
\newblock The Johns Hopkins University Press, Baltimore, MD, 1981.

\bibitem{stenf01}
\"{O}rjan Stenflo.
\newblock Ergodic theorems for {Markov} chains represented by iterated function
  systems.
\newblock {\em Bulletin of the Polish Academy of Sciences: Mathematics},
  49:427--443, 2001.

\bibitem{ram99}
V.~Ramaswami.
\newblock Matrix analytic methods for stochastic fluid flows.
\newblock In D.~Smith and P.~Hey, editors, {\em Teletraffic Engineering in a
  Competitive World (Proceedings of the 16th International Teletraffic
  Congress)}, pages 1019--1030. Elsevier Science B.V., Edinburgh, UK, 1999.

\bibitem{ram11}
V.~Ramaswami.
\newblock A fluid introduction to {B}rownian motion and stochastic integration.
\newblock In G.~Latouche, V.~Ramaswami, J.~Sethuraman, K.~Sigman,
  M.~Squillante, and D.~Yao, editors, {\em Matrix-Analytic Methods in
  Stochastic Models}, volume~27 of {\em Springer Proceedings in Mathematics \&
  Statistics}, chapter~10, pages 209--225. Springer Science, New York, NY,
  2013.

\bibitem{roger94}
L.~C.~G. Rogers.
\newblock Fluid models in queueing theory and {Wiener-Hopf} factorization of
  {Markov} chains.
\newblock {\em Ann. Appl. Probab.}, 4:390--413, 1994.

\bibitem{whitt70}
W.~Whitt.
\newblock Weak convergence of probability measures on the function space
  ${C}[0,\infty)$.
\newblock {\em {A}nnals of {M}athematical {S}tatistics}, 41(2):939--944, 1970.

\end{thebibliography}

\end{document}